\definecolor{darkgreen}{rgb}{0,0.5,0}
\DeclareSymbolFont{cyrletters}{OT2}{wncyr}{m}{n}
\DeclareMathSymbol{\Sha}{\mathalpha}{cyrletters}{"58}
\newcommand{\Q}{\mathbb{Q}}
\newcommand{\calU}{\mathcal{U}}
\newcommand{\calD}{\mathcal{D}}
\newcommand{\Z}{\mathbb{Z}}
\newcommand{\F}{\mathbb{F}}
\renewcommand{\O}{\mathcal{O}}
\newcommand{\p}{\mathfrak{p}}
\newcommand{\q}{\mathfrak{q}}
\newcommand{\hdr}{\operatorname{H^1_{dR}}}
\newcommand{\Div}{\operatorname{Div}}
\newcommand{\dv}{\operatorname{div}}
\newcommand{\Res}{\operatorname{Res}}
\newcommand{\tr}{\operatorname{tr}}
\newcommand{\id}{\operatorname{id}}
\newcommand{\Spec}{\operatorname{Spec}}
\newcommand{\supp}{\operatorname{supp}}
\newcommand{\ord}{\operatorname{ord}}
\newcommand{\rk}{\operatorname{rk}}
\renewcommand{\sp}{\operatorname{sp}}
\renewcommand{\O}{\mathcal{O}}
\newtheorem{theorem}{Theorem}[section]
\newtheorem{proposition}[theorem]{Proposition}
\newtheorem{lemma}[theorem]{Lemma}
\newtheorem{cond}{Condition}
\newtheorem{alg}[theorem]{Algorithm}
\theoremstyle{remark}
\newtheorem{remark}[theorem]{Remark}
\newtheorem{definition}[theorem]{Definition}
\numberwithin{equation}{section}
\title{Linear quadratic Chabauty}
\author[Stevan Gajovi\'c]{Stevan Gajovi\'c}
\address{Stevan Gajovi\'c, Charles University, Faculty of Mathematics and Physics, Department of Algebra, Sokolov\-sk\' a 83, 186~75 Praha~8, Czech Republic}
\email{gajovic@karlin.mff.cuni.cz}
\author[J.~Steffen M\"uller]{J.~Steffen M\"uller}
\address{J.~Steffen M\"uller, University of Groningen, Faculty of Science and Engineering, Algebra, Nijenborgh~9, 9747~AG Groningen, Netherlands}
\email{steffen.muller@rug.nl}
\date{}
\begin{document}

\maketitle

\begin{abstract}
We introduce a new quadratic
  Chabauty method to compute the integral points on certain even degree
  hyperelliptic curves. Our approach relies on a nontrivial degree zero divisor supported at the two
  points at infinity to restrict the $p$-adic height to a linear function;
  we can then express this restriction in terms of holomorphic Coleman integrals under the
  standard quadratic Chabauty assumption. Then we use this linear relation
  to extract the integral points on the curve. We also generalize our method to
  integral points over number fields.
  Our method is significantly simpler and faster than all other existing versions of the quadratic Chabauty method. We give examples over $\Q$ and $\Q(\sqrt{7})$.
\end{abstract}

\section{Introduction}\label{sec:intro}
Suppose that $X/\Q$ is a hyperelliptic curve, given by an affine equation  $y^2=f(x)$ with 
$f\in \Z[x]$ squarefree.
We set 
\begin{equation*}\label{D:calU}
  \mathcal{U}=\Spec\Z[x,y]/(y^2-f(x))\,.
\end{equation*}
Balakrishnan, Besser and the
second author developed in~\cite{BBM1} a $p$-adic method, called
{\it quadratic Chabauty}, to
compute the set $\calU(\Z)$ of integral points in $X(\Q)$ in the following
situation: Suppose that $f$ is monic and of odd degree $2g+1>2$,
let $p$ be a prime of good reduction for
$X$,  suppose that the rank $r$ of the Mordell--Weil group $J(\Q)$ is equal to the genus
$g$ of $X$, and that the closure of $J(\Q)$ has finite index in $J(\Q_p)$.
In~\cite{BBM1}, the authors construct a nontrivial locally
analytic function $\rho\colon \calU(\Z_p)\to \Q_p$ such that $\rho(\calU(\Z))\subseteq T$, for a finite and computable set $T\subset \Q_p$.
The construction depends on properties of the
$p$-adic height pairing $h$ introduced by Coleman and
Gross~\cite{Coleman-Gross-Heights}.
This result was later reproved in~\cite{Jen-Netan-QCRP1} by
Balakrishnan--Dogra via Chabauty--Kim theory; using similar ideas, they
also developed a quadratic Chabauty method for rational points on certain
curves (since this assumes that the Picard number is at least~2, it does
not supersede the quadratic Chabauty method for integral points, which has
no such assumption).
It was later extended in \cite{BBBM-QCNF} to the case of $\O_K$-integral points on certain hyperelliptic curves $X$ of odd degree over number fields $K$. 
This quadratic Chabauty method can be used to compute $\calU(\Z)$ explicitly,
see~\cite{BBM1,BBM2}. For this, one can use algorithms for $h$ due to Balakrishnan and
Besser~\cite{BBHeights} and the second author~\cite{Mul14}.
The construction of $\rho$ involves the following tools:
\begin{enumerate}
\item[(i)] $p$-adic Arakelov theory as developed by Besser \cite{Besser-padic-Arakelov};
\item[(ii)] an extension of local Coleman--Gross heights to 
divisors with common support;
\item[(iii)] double Coleman integrals;
\item[(iv)] Coleman integrals where one of the endpoints is a tangential base point.
\end{enumerate}
Therefore this construction is quite technical. In the present paper, we
find a similar but much simpler method to construct a function $\rho$ that
we use to compute integral points on {\it even} degree hyperelliptic curves
(under certain conditions). Our construction does not require any of the
tools (i)--(iv).

\subsection{Linear quadratic Chabauty for $\Z$-integral points}
In Section~\ref{sec:QC-over-Z}
we introduce a new quadratic Chabauty method for integral points on certain
even degree hyperelliptic curves $X\colon y^2=f(x)$ satisfying $r=g$. 
Our approach differs from that of~\cite{BBM1, BBM2} in that it relies on the
existence of a nontrivial $\Q$-rational divisor of degree~0 on $X$
supported at infinity, which exists precisely when the leading coefficient
of $f$ is a square. It is also notably faster, since the slowest part of
the quadratic Chabauty method in~\cite{BBM1, BBM2} is the computation of
the double Coleman integrals, which our approach avoids.

Our method is based on the following result, which we prove
in~\cref{subsec:QC-Z-integral}.
\begin{theorem}\label{T:QC-integrals-pts-Z}
Let $X\colon y^2=f(x)$ be a hyperelliptic curve over $\Q$ such that $f\in
  \Z[x]$ is a squarefree polynomial of degree $2g+2$ whose leading
  coefficient is a square in $\Z$. Suppose $r=g$. Let $p>g$ be a prime of
  good reduction for $X$. 
  Then there is a nontrivial computable locally
  analytic function $\rho\colon \calU(\Z_p)\to \Q_p$ and a finite and computable set $T$, 
  such that $\rho(\calU(\Z))\subseteq T$. 
\end{theorem}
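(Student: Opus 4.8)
The plan is to mimic the Chabauty--Kim / Coleman--Gross strategy but to exploit the degree-zero divisor at infinity to collapse the quadratic height into a linear combination of holomorphic Coleman integrals. Write $\infty_+,\infty_-$ for the two points at infinity, which are rational because the leading coefficient of $f$ is a square; set $D=\infty_+-\infty_-\in\Div^0(X)(\Q)$. Since $r=g$ and the closure of $J(\Q)$ has finite index in $J(\Q_p)$, the standard quadratic Chabauty setup applies. First I would recall the global $p$-adic height $h\colon J(\Q_p)\times J(\Q_p)\to\Q_p$ of Coleman--Gross, which decomposes as a sum of local heights $h=\sum_v h_v$; away from $p$ the local heights are given by intersection multiplicities (and vanish at good primes for the classes we use, if we choose the auxiliary correspondence appropriately), while $h_p$ is the Coleman--Gross local height at $p$, expressible via a Coleman integral against a differential of the third kind.

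The key computational step is to evaluate $h(P-\infty_-,\,D)$ for an integral point $P$. The point is that pairing against the \emph{fixed} divisor $D$ makes the dependence on $P$ linear rather than quadratic: the local height $h_p([P-\infty_-],D)$ reduces, under the standard quadratic Chabauty assumption (good reduction, $p>g$, the relevant cohomology classes behaving well), to a single holomorphic Coleman integral $\int_{\infty_-}^{P}\omega_D$, where $\omega_D$ is the differential of the third kind with residue divisor $D$ whose holomorphic part is pinned down by the complementary subspace condition; here the crucial simplification is that for a divisor supported at infinity the "logarithmic" contribution is controlled and the residual term is holomorphic, i.e.\ lies in the $g$-dimensional space spanned by $x^i\,dx/y$. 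Simultaneously, the global bilinearity of $h$ together with $r=g$ lets me express $h([P-\infty_-],D)$ as a linear functional in the abelian logarithm $\log(P-\infty_-)\in\Q_p^g$, with coefficients determined by finitely many heights of a Mordell--Weil basis — all computable.

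Equating the two expressions yields a single linear relation among the holomorphic Coleman integrals $\int_{\infty_-}^{P} x^i\,dx/y$, $0\le i\le g-1$, with computable $p$-adic coefficients; this relation is the function $\rho$. I would then argue nontriviality: if $\rho$ were identically zero on $\calU(\Z_p)$, the coefficient vector would have to be orthogonal to the image of $\calU(\Z_p)$ under the Coleman integration map, which (by the good-reduction hypothesis and $p>g$, so that the integrals separate residue discs suitably) forces a degeneracy that contradicts the nonvanishing of the $p$-adic height pairing on $J(\Q)$ implied by $r=g$ and Coleman--Gross nondegeneracy. Finally, local analyticity of $\rho$ is immediate since Coleman integrals are locally analytic, and the set $T$ is obtained as the (finite) set of values the linear side takes as one ranges over the finitely many residue discs and solves the resulting power-series equation in each disc.

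I expect the main obstacle to be the second step: proving cleanly that $h_p([P-\infty_-],D)$ is exactly a holomorphic Coleman integral — i.e.\ that pairing against $D$ kills the non-holomorphic (logarithmic / differential-of-the-third-kind) contributions that plague the odd-degree case — and keeping track of the normalization of $\omega_D$ and of the tangential behavior at $\infty_\pm$ without invoking the heavy machinery (i)--(iv) of~\cite{BBM1}. Establishing this reduction rigorously, and checking that the away-from-$p$ local heights genuinely vanish (or are absorbed into $T$) for the chosen classes, is where the real work lies.
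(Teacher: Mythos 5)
Your high-level plan is the right one --- restrict the height to the linear functional $a\mapsto h(\infty_--\infty_+,a)$, expand it in holomorphic Coleman integrals using $r=g$, subtract the local height at $p$, and absorb the heights away from $p$ into a finite set $T$ --- but three of your concrete steps do not work as written. First, you propose to evaluate $h(P-\infty_-,\,D)$ with $D=\infty_+-\infty_-$. These divisors share the point $\infty_-$ in their supports, so the Coleman--Gross local heights are simply not defined for this pair; making sense of them is exactly the ``extension to divisors with common support / tangential base points / $p$-adic Arakelov'' machinery that this construction is designed to avoid. The correct move is to fix an affine base point $Q\in\calU(\Z)$ (or use $\iota(P)$) and pair $\infty_--\infty_+$ against $P-Q$, which has disjoint support from $\{\infty_\pm\}$.

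Second, your claim that $h_p$ of this pair ``reduces to a holomorphic Coleman integral $\int_{\infty_-}^{P}\omega_D$'' is false on two counts: $\omega_D$ is genuinely of the third kind (it is $x^g\,dx/y$ minus a holomorphic correction, with simple poles at $\infty_\pm$), and an integral of it with endpoint $\infty_-$ diverges. What actually happens is
\[
h_p(\infty_--\infty_+,P-Q)=\int_Q^P\frac{x^g\,dx}{y}-\sum_{i=0}^{g-1}u_i\int_Q^P\frac{x^i\,dx}{y},
\]
an integral of a third-kind differential between \emph{affine} points, which is perfectly convergent and locally analytic on $\calU(\Z_p)$. This matters for nontriviality: $\rho$ is nonzero because it is a linear combination of the integrals of the $g+1$ independent differentials $\omega_0,\dots,\omega_g$ in which $\int\omega_g$ appears with coefficient $-1$; your argument instead appeals to nondegeneracy of the Coleman--Gross height pairing, which is an open conjecture and is not needed. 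Finally, the set $T$ is not produced by ``solving power-series equations in residue discs''; it is the finite set of possible values of $\sum_{q\neq p}h_q(\infty_--\infty_+,P-Q)$, computed via the intersection pairing (Moore--Penrose pseudoinverse of the intersection matrix) on regular models at the finitely many bad primes, with $T_q=\{0\}$ at good primes. You also silently assume that the closure of $J(\Q)$ has finite index in $J(\Q_p)$; when that fails one falls back on Chabauty--Coleman.
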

We briefly describe the construction of $\rho$ and $T$.
 They rely on the global $p$-adic height pairing $h$ due
to Coleman and Gross~\cite{Coleman-Gross-Heights}.
This is a sum of local heights \[h=\sum_{v\in
\Z\;\text{prime}}h_v=h_p+\sum_{q\neq p\; \text{finite prime}}h_q\,.\] 
More precisely, we use 
 Coleman integrals and the local height $h_p$ (see
 \cref{subsec:intro-heights-at-p})
to construct $\rho$,  and we analyze local heights away from
 $p$ to obtain the set $T$ (see \cref{subsec:intro-heights-away-p} and
\cref{sec:heights_away}). 

We may assume that $f$ is monic and that the functions
$f_0,\ldots,f_{g-1}\colon J(\Q)\to \Q_p$
defined by 
$f_i(a)\colonequals \int_0^{a}\frac{x^idx}{y}$ induce a basis of
the space of linear maps $J(\Q)\otimes \Q\rightarrow \Q_p$, since otherwise
the method of Chabauty--Coleman, as discussed
in~\cite{Chabauty-Coleman-MCP12}, may be applied.
Hence the linear function 
$\lambda\colon J(\Q)\rightarrow \Q_p$ given by 
$\lambda(a)=h(\infty_--\infty_+,a)$ can be represented as a $\Q_p$-linear combination
$\lambda=\sum_{i=0}^{g-1} \alpha_i f_i$.
For simplicity, we assume that
we know an integral point $Q\in \calU(\Z)$ (see Remark~\ref{r:no-int-point} for an approach when no such point is known). Then 
\[
    \rho(P)\colonequals \sum_{i=0}^{g-1} \alpha_i \int_{Q}^P\frac{x^idx}{y}-h_p(\infty_--\infty_+,P-Q)
\]
defines a computable nontrivial locally analytic function
$\rho\colon \calU(\Z_p)\rightarrow \Q_p$. For $P\in \calU(\Z)$, we have that 
\[
\rho(P)=\sum_{q\neq p}h_q(\infty_--\infty_+,P-Q).
\]
For $q\ne p$ and $P\in \calU(\Z_q)$, Lemma~\ref{L:away}
below shows that $h_q(\infty_--\infty_+,P-Q)$ vanishes for good primes $q$
and takes values in a computable finite set for bad primes $q$, and we
obtain $T$ from these finite sets.

We discuss an explicit method for the
computation of $\calU(\Z)$ stemming from Theorem~\ref{T:QC-integrals-pts-Z}
in \cref{subsec:QC-alg-Z-integral} and we give a precision analysis
in~\cref{subsec:prec-qc-for-Z}.
In \cref{subsec:monic_ex}, we illustrate our method  by 
computing the set of integral points on 
\[
X\colon y^2 = x^6 + 2x^5 - 7x^4 - 18x^3 + 2x^2 + 20x + 9\,.
\]
In this example, the finite subset of $X(\Q_p)$ resulting from our method equals the 
set of integral points on $X$. In general, this equality might not hold,
but in such cases we can combine our approach with the Mordell--Weil sieve
(see~\cite{Bruin-Stoll-MW-Sieve})  to still find the integral points.

Since we assume that the leading coefficient $d_{2g+2}$ of $f$ is a square,
there is also an elementary method to compute the integral points on curves satisfying the
conditions of Theorem~\ref{T:QC-integrals-pts-Z}; namely, a variant of
Runge's method can be applied, see Appendix \ref{appendix:Runge}.
A similar approach works for a negative leading coefficient. 
To deal with non-square positive leading coefficients, one would have to 
extend the method from \cite{BBM1} using $p$-adic Arakelov theory and
double Coleman integrals to this case. 

\subsection{Linear quadratic Chabauty for integral points over
number fields}
Similar to the extension of the techniques of~\cite{BBM1} in \cite{BBBM-QCNF}, 
we can extend our approach to compute the $\O_K$-integral points on certain
even degree hyperelliptic curves $X$ over number fields $K$ of degree
$d>1$. 
This method is based on Theorem~\ref{T:QC-NF-integrals-pts-full-statement},
which generalizes Theorem~\ref{T:QC-integrals-pts-Z}.
For this result, we assume that $\infty_{\pm}\in X(K)$ and that $p$ is a
prime that splits completely in $K$ such that $X$ has good reduction at all primes
above $p$.
As in~\cite{BBBM-QCNF}, a necessary  condition is that
$\rk(J/K)\leq dg-r_K$,  where $r_K = \rk \O_K^\ast$. 
For our extension, we use that the $p$-adic height depends on a choice of
a continuous idèle class character
$\chi\colon \mathbb{A}_K ^{\ast}/K^{\ast}\to
\mathbb{Q}_p$.
While there is a canonical choice for $\chi$ when $K=\Q$, this is false in
general,
and in fact it helps to use the maximal number of independent $\chi$. These give rise to $d-r_K$ equations satisfied by the image
of the $\O_K$-integral points on $X$ in $X(\Q_p)^d$. In addition, 
we use $r_K$ equations coming from restriction of scalars and Coleman
integrals, as in the work of Siksek for Chabauty over number fields \cite{Siksek-Chabauty-NF}. 

When the conditions of Theorem~\ref{T:QC-NF-integrals-pts-full-statement} are satisfied,
then we may use it to compute a subset of 
$X(\Q_p)^d$ which contains the images of all $\O_K$-integral points under
the natural embedding (see
\cref{subsec:alg-OK-points}).
In \cref{subsec:Example-QC-OK-real-points} 
we illustrate our technique by determining the set of $\O_K$-integral
points on 
\[
  y^2 = x^6 + x^5 - wx^4 + (1-w)x^3 + x^2 + wx + w\,,
\]
where $K=\Q(w)$ and $w^2=7$.
This computation required a combination of linear quadratic Chabauty with the
Mordell--Weil sieve. We are not aware of any other method that would be able to handle this example.

In contrast
to the case $X/\Q$, the set computed by our method is not necessarily finite. 
For instance,
when $f\in \Z[x]$ has even degree and its leading coefficient $d_{2g+2}$ is a positive
non-square in $\Z$, then our approach does not allow us to compute the $\O_K$-integral points on $X\colon y^2=f(x)$
for $K=\Q(\sqrt{d_{2g+2}})$, so this approach cannot be used to compute the
$\Z$-integral points either.

\subsection*{Acknowledgements}
It is a pleasure to thank
Jennifer Balakrishnan, Francesca Bianchi and Michael Stoll for many comments and
suggestions, and Amnon Besser,  Bas Edixhoven, Sachi Hashimoto, Enis Kaya, Guido Lido, Samir Siksek and Jaap Top for helpful discussions.
We thank Francesca Bianchi for sharing an implementation of the algorithm
from~\cite[Appendix A]{BBBM-QCNF} and Marius Leonhardt
and Martin L\"udtke for comments on
previous versions of this paper.
We are indebted to Pierre Colmez for suggesting the use of the linear map
$\lambda$, to Jennifer Balakrishnan for sharing this idea with us, and to
Bas Edixhoven for suggesting the title.
We thank the referee for a careful and useful report.
We acknowledge support by DFG through DFG-Grant MU 4110/1-1. 
In addition. S.M. was supported by NWO Grant VI.Vidi.192.106., and S.G. was
supported by a guest postdoc fellowship at the  Max Planck Institute for
Mathematics in Bonn, by Czech Science Foundation GAČR, grant 21-00420M, and
by Charles University Research Centre program UNCE/SCI/022 during various
stages of this project. 

Part of the research in this article has already appeared in the first
author's PhD thesis~\cite{Gajovic-thesis} at the University of Groningen,
written under the second
author's supervision. 

\section{Coleman--Gross $p$-adic heights}\label{sec:heights}
We fix a prime number $p\in \Z$. Let $K$ be a number
field and let $X/K$  be a smooth projective and geometrically integral curve of genus $g>0$, with good reduction at all primes above $p$.
The $p$-adic height on the Jacobian $J/K$ of $X$  depends on the choice of a continuous idèle class character
\begin{equation}\label{idchar}
\chi\colon \mathbb{A}_K ^{\ast}/K^{\ast}\longrightarrow \mathbb{Q}_p
\end{equation}
and of a subspace $W_\p \subset \hdr(X_\p/K_\p)$
complementary to the space of holomorphic forms for each $\p\mid p$. 
Coleman-Gross construct the height pairing on $J$ by defining local height
pairings $h_v$ for all finite primes $v$
of $K$, and then summing them up. We make use of the following
notation: for a finite place $v$ of $K$ we denote by $K_v$ the completion
of $K$ at $v$,  by $\O_v$ its ring of integers and by $\pi_v$ a
uniformizer. We write $X_v\colonequals X\otimes K_v$.

\begin{definition}\label{def:local-height}
  A \textit{local height pairing} (with respect to $\chi$) at a finite prime $v$ of
  $K$ is a continuous, symmetric and bi-additive function $h_v$ that assigns to a pair of
  divisors $D_1,D_2 \in \Div^0(X_v)$ with disjoint support an element of
  $\Q_p$ with the following property: 
for all $f\in K_{v}(X_v)^{\ast}$ and $D_2\in \Div^0(X_v)$ such that $\supp(\dv(f))\cap
  \supp(D_2)=\emptyset$, we have
  $$h_v(\dv(f),D_2)=\chi_{v}(f(D_2)).$$
\end{definition}
The construction of $h_v$ depends on whether $\chi_v$ is unramified 
(see~\cref{subsec:intro-heights-away-p}) or is ramified 
(see~\cref{subsec:intro-heights-at-p}).
When $D_1,D_2\in \Div^0(X)$, the local heights satisfy $h_v(D_1, D_2)\colonequals h_v(D_1\otimes K_v, D_2\otimes K_v)=0$ for all but finitely many $v$. Thus, their sum for $D_1,D_2\in \Div^0(X)$ is well-defined: 
$$
h(D_1,D_2)\colonequals \sum_vh_v(D_1,D_2)\colon \Div^0(X)\times \Div^0(X) \longrightarrow \Q_p\,.
$$
By~\cite[Sections~1 and~5]{Coleman-Gross-Heights}, $h$ descends to a bilinear
pairing on $J(K)$, called the (global Coleman--Gross) {\it $p$-adic height
pairing}:
$$
h \colon J(K)\times J(K)\to \Q_p\,.
$$

\subsection{Continuous $\Q_p$-valued idèle class
characters}\label{subsec:intro-heights-icc} 
In this section, we briefly recall
properties of continuous idèle class characters with codomain $\Q_p$,
following~\cite[\S2.1]{BBBM-QCNF}. See also~\cite[\S6.5.4]{Gajovic-thesis}.

We say that a continuous idèle class character~\eqref{idchar}
is~\textit{ramified} at a place $\q$ if $\chi_\q(\O_{K_\q}^{\ast})\ne 0$.
If $\chi$ is unramified at $\q$, then $\chi_\q$ is completely determined by
$\chi_\q(\pi_\q)$. 
For continuity reasons, $\chi$ is unramified at 
 any place $\q\nmid p$.
On the other hand, for
$\p\mid p$, there is  a $\mathbb{Q}_p$-linear map $\tr_\p\colon K_\p\to \Q_p$
such that 
\begin{equation}\label{ellfact}
  \chi_\p(x) = \tr_\p \circ \log_\p(x)\quad\text{for all}\; x \in
  \O_{K_\p}^{\ast}\,.
\end{equation}
If $\chi$ is ramified at $\p$ (see~\cite[Remark 2.1]{BBBM-QCNF}), there is
a branch of the logarithm $\log_\p\colon K_\p^{\ast} \to K_\p$ such that 
\eqref{ellfact} remains valid on $K_\p^{\ast}$.
From now on, we assume that $\chi$ is ramified at least one $\p\mid p$, and
we fix the branch of the logarithm at $\p\mid p$ to be this $\log_\p$.

By~\cite[Lemma~2.2]{BBBM-QCNF}, $\chi$ is uniquely determined by $\tr_\p$ for
all $\p\mid p$ such that $\chi_\p$ is ramified.
The choice $\tr_\p=\tr_{K_\p/\Q_p}$ in
\eqref{ellfact} for all $\p$ such that $\chi_\p$ is ramified, 
we get the {\it cyclotomic idèle class
character}, see \cite[Lemma 2.3, Example 2.7]{BBBM-QCNF}. In particular,
when $K$ is totally real, then this is, up to scaling, the unique continuous idèle class character (\cite[Corollary 2.4, Example 2.7]{BBBM-QCNF}). The ground fields in our examples in \cref{subsec:monic_ex} and \cref{subsec:Example-QC-OK-real-points} are indeed totally real.
For instance, when $K=\Q$, the cyclotomic character is given by $\tr_\p=\id$. Then 
$\chi_\p= \log_p$  is  defined by
$\log_p(p)=0$, since $\chi_q(p)=0$ for all $q\neq p$ by continuity.
Similarly, we also have $\chi_q(q) = -\log_p(q)$.
When $K$ is a real quadratic number field and $p$ splits in
$K$, then, identifying $K_\p$ with $\Q_p$ for all $\p\mid p$, we can again
take $\tr_\p=\id$, and we find $\chi_\p= \log_p$ with $\log_p(p)=0$; for
$\q\nmid p$, we have $\chi_\q(\pi_q) = -\log_p(N_{K_\q/\Q_q}(\pi_q))$.

\subsection{Local heights when $\chi_v$ is
unramified}\label{subsec:intro-heights-away-p}
We briefly review the construction of $h_v$ for a prime $v=\q$ of $K$ such that
$\chi_\q$ is unramified, for instance a prime $\q\nmid p$. 
Given two divisors $D_1,D_2\in \Div^0(X_\q)$ with disjoint support, we
denote by $\calD_1$ and $\calD_2$ extensions of these divisors to 
a regular model of $X_\q$ such that the 
intersection multiplicity $(\calD_i\cdot V)\in \Q$ vanishes
for all vertical divisors $V$.

\begin{proposition}{\cite[Proposition 1.2]{Coleman-Gross-Heights}}\label{P:htaway} 
For a prime $\q$ such that $\chi_\q$ is unramified, there is a unique local height pairing $h_\q$ in
  the sense of Definition \ref{def:local-height}. It is
defined by
\begin{equation}\label{eq:intmult}
  h_\q(D_1,D_2)=\chi_{\q}(\pi_\q)\cdot (\calD_1\cdot \calD_2)\,.
\end{equation}
\end{proposition}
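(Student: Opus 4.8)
The plan is to recall the construction of Coleman and Gross: existence is proved by making sense of the normalized intersection pairing on a regular model, and uniqueness follows from the topological structure of $J(K_\q)$. Fix a regular model $\calX/\O_\q$ of $X_\q$ and, for $D_i\in\Div^0(X_\q)$, let $\overline{D_i}$ be its Zariski closure in $\calX$, a horizontal divisor. The essential linear-algebra input is that the intersection matrix $(C_j\cdot C_k)$ of the irreducible components of the special fibre is negative semidefinite with kernel spanned by the class of the full fibre $F$. Since $(\overline{D_i}\cdot F)=\deg D_i=0$, the linear system $\sum_j a_j(C_j\cdot C_k)=-(\overline{D_i}\cdot C_k)$ is solvable over $\Q$, and its solution $\Phi(D_i)=\sum_j a_jC_j$ is unique up to a rational multiple of $F$. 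Thus $\calD_i\colonequals \overline{D_i}+\Phi(D_i)$ has $(\calD_i\cdot V)=0$ for every vertical $V$, and because $(F\cdot V')=0$ for every divisor $V'$ the rational number $(\calD_1\cdot\calD_2)$ does not depend on the choice of $\Phi(D_i)$; a comparison of regular models shows it is also independent of $\calX$. Bi-additivity and symmetry are then inherited from the intersection pairing, and continuity in the $\q$-adic topology follows from the fact that intersection multiplicities are locally constant as the points of $D_1,D_2$ vary. We define $h_\q$ by \eqref{eq:intmult}.

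Next I would verify the defining property of Definition~\ref{def:local-height}. For $f\in K_\q(X_\q)^\ast$ with $\supp(\dv f)\cap\supp(D_2)=\emptyset$, the divisor $\dv_{\calX}(f)$ of $f$ regarded as a rational function on $\calX$ is principal, hence orthogonal to \emph{every} divisor on $\calX$; in particular it is a legitimate choice of $\calD_1$ for $D_1=\dv(f)$, so $h_\q(\dv f,D_2)=\chi_\q(\pi_\q)\,(\dv_{\calX}(f)\cdot\calD_2)$. Since $\dv_{\calX}(f)$ is principal, its intersection with the vertical part $\Phi(D_2)$ vanishes, leaving $\chi_\q(\pi_\q)\,(\dv_{\calX}(f)\cdot\overline{D_2})$; and the intersection of the principal divisor $\dv_{\calX}(f)$ with the closure of a closed point of $X_\q$ computes the normalized valuation of $f$ there, whence $(\dv_{\calX}(f)\cdot\overline{D_2})=\ord_\q(f(D_2))$. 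Finally, because $\chi_\q$ is unramified it factors through the valuation, i.e. $\chi_\q(u)=\ord_\q(u)\,\chi_\q(\pi_\q)$ for $u\in K_\q^\ast$; applying this with $u=f(D_2)$ gives $h_\q(\dv f,D_2)=\chi_\q(f(D_2))$. This is the only step where unramifiedness of $\chi_\q$ is used.

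For uniqueness, suppose $h_\q$ and $h_\q'$ both satisfy Definition~\ref{def:local-height}. Their difference $\delta$ is a continuous, symmetric, bi-additive $\Q_p$-valued pairing with $\delta(\dv f,D_2)=0$ whenever the supports are disjoint; by symmetry and a standard density argument as in \cite[\S1,\S5]{Coleman-Gross-Heights}, $\delta$ descends to a continuous bi-additive pairing on $\Pic^0(X_\q)\times\Pic^0(X_\q)=J(K_\q)\times J(K_\q)$. Now $J(K_\q)$ contains the kernel of reduction, a $\q$-adic formal group of dimension $g$, as an open subgroup of finite index, so it has an open finite-index pro-$q$ subgroup isomorphic to a free $\Z_q$-module of finite rank. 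Since $q$ is a unit in $\Z_p$, any continuous homomorphism from $\Z_q$ to $\Q_p$ sends a topological generator to an element $\phi(1)$ with $|q^m\phi(1)|_p=|\phi(1)|_p$ for all $m$, while $q^m\phi(1)\to 0$; hence $\phi(1)=0$. Therefore every continuous homomorphism $J(K_\q)\to\Q_p$ kills the pro-$q$ part and, being valued in a torsion-free group, also kills the finite quotient, so it vanishes identically. Fixing one argument of $\delta$ shows $\delta=0$, giving uniqueness. The most delicate point in the whole argument is the well-definedness in the first paragraph — the independence of the regular model and the continuity of $h_\q$ — which is precisely the content of the construction in \cite[\S1]{Coleman-Gross-Heights} that we are recalling.
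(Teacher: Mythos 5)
This proposition is not proved in the paper --- it is quoted directly from \cite[Proposition~1.2]{Coleman-Gross-Heights} --- so there is no in-paper argument to compare against. Your writeup is a faithful reconstruction of the standard proof from that reference: existence via the Zariski closure plus a vertical correction $\Phi(D_i)$ obtained by solving a linear system governed by the negative semidefinite intersection matrix (kernel spanned by the full fibre), verification of the defining property via $(\dv_{\calX}(f)\cdot\overline{D_2})=\ord_\q(f(D_2))$ combined with $\chi_\q(u)=\ord_\q(u)\,\chi_\q(\pi_\q)$, and uniqueness via the vanishing of continuous homomorphisms $J(K_\q)\to\Q_p$.

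Two caveats. First, the assertion that $\dv_{\calX}(f)$ is ``orthogonal to every divisor on $\calX$'' is false and contradicts your own computation two lines later: a principal divisor on the regular model is orthogonal only to \emph{vertical} divisors (the restriction of $\O_{\calX}$ to each component of the special fibre has degree $0$), while its pairing with a horizontal divisor is precisely the valuation term $\ord_\q(f(D_2))$ that you then use. Since both places where you invoke the claim only need orthogonality to vertical divisors, the argument survives, but the sentence should be corrected. Second, your uniqueness argument uses that $J(K_\q)$ is almost pro-$q$ with $q\ne p$, i.e.\ it only applies when $\q\nmid p$. The proposition as stated allows $\chi_\q$ to be unramified at a prime above $p$, and in that case uniqueness genuinely fails: for a nonzero continuous homomorphism $\phi\colon J(K_\p)\to\Q_p$ (e.g.\ a Coleman integral of a holomorphic form), adding $\phi(a)\phi(b)$ produces another pairing satisfying Definition~\ref{def:local-height}. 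In this paper the proposition is only ever applied at primes $\q\nmid p$, so your proof covers every case that is actually used, but it is worth flagging that the literal statement is broader than what the uniqueness argument delivers.
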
 

We describe the possible values $h_\q(\infty_--\infty_+, D_2)$ on even
degree hyperelliptic curves in~\S\ref{sec:heights_away}. These are crucial in our linear quadratic Chabauty method in Sections~\ref{sec:QC-over-Z} and~\ref{sec:QC-NF}.

\subsection{Local heights when $\chi_v$ is
ramified}\label{subsec:intro-heights-at-p}

When $\chi_v$ is ramified, then $v=\p\mid p$ and one can define a local height pairing $h_\p$ by the Coleman integral 
\begin{equation}\label{hp}
h_\p(D_1,D_2)=\tr_\p\left(\int_{D_2}\omega_{D_1}\right)\,,
\end{equation}
were $\omega_{D_1}$ is a differential of the third kind  whose residue
divisor is $D_1$ (see~\cite{Coleman-Gross-Heights} or~\cite[Section
2]{BBHeights}). There are many such differentials, and one uses the choice
of a subspace $W_\p \subset \hdr(X_\p/K_\p)$, complementary to the
holomorphic differentials, to fix one of them  as
in~\cite[Definition~2.7]{BBHeights}. We write $\omega_{D_1}$ for this differential.

\begin{proposition}{\cite[Proposition
  5.2]{Coleman-Gross-Heights}}\label{P:htabove}
  Let $\p\mid p$ be a prime such that $\chi_\p$ is ramified and let $W_\p \subset \hdr(X_\p/K_\p)$ be a complementary subspace that is
  isotropic with respect to the cup product.
  Then~\eqref{hp} defines 
  a local height pairing in the sense of Definition \ref{def:local-height}.
\end{proposition}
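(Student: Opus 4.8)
The plan is to recall the normalization of $\omega_{D_1}$ and then verify the requirements of Definition~\ref{def:local-height} one by one, following \cite[Proposition~5.2]{Coleman-Gross-Heights}; the isotropy of $W_\p$ will only be needed for symmetry. For $D_1\in\Div^0(X_\p)$, the differentials of the third kind $\omega$ with residue divisor $D_1$ form a torsor under the space $H^0(X_\p,\Omega^1_{X_\p})$ of holomorphic differentials, and since $W_\p$ is a complement to this space inside $\hdr(X_\p/K_\p)$, there is exactly one such $\omega$ whose holomorphic part vanishes in the sense of \cite[Definition~2.7]{BBHeights}; this is $\omega_{D_1}$. Two features of this construction will be used repeatedly: it is $K_\p$-linear in $D_1$, since the vanishing of the holomorphic part is a linear condition, so $\omega_{D_1+D_1'}=\omega_{D_1}+\omega_{D_1'}$; and for $D_1=\dv(f)$ it returns $\omega_{\dv(f)}=df/f$, because $df/f$ is a differential of the third kind with residue divisor $\dv(f)$ and trivial holomorphic part. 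Moreover, for $D_2=\sum_j m_j(Q_j)\in\Div^0(X_\p)$ with $\supp(D_2)\cap\supp(D_1)=\emptyset$, I read the Coleman integral as $\int_{D_2}\omega_{D_1}\colonequals\sum_j m_j\int_{P_0}^{Q_j}\omega_{D_1}$ for an auxiliary base point $P_0\notin\supp(D_1)$; this is independent of $P_0$ because $\deg D_2=0$, and it makes sense as a Coleman integral of a differential of the third kind since $X$ has good reduction at $\p$.

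Bi-additivity, continuity, and the divisor property are then essentially formal. Additivity in the first argument follows from $\omega_{D_1+D_1'}=\omega_{D_1}+\omega_{D_1'}$ together with the $\Q_p$-linearity of Coleman integration and of $\tr_\p$, and additivity in the second argument is visible from the formula for $\int_{D_2}\omega_{D_1}$ above. For continuity I would use that $\omega_{D_1}$ depends analytically on $D_1$ (the normalization amounts to a linear system with analytically varying coefficients), that Coleman integrals of a third-kind differential are locally analytic in the endpoints away from the poles, and that $\tr_\p$ is continuous. For the divisor property, note that $\log_\p\circ f$ is a Coleman primitive of $df/f$ for the fixed branch $\log_\p$, so
\[
  \int_{D_2}\frac{df}{f}=\sum_j m_j\big(\log_\p f(Q_j)-\log_\p f(P_0)\big)=\sum_j m_j\log_\p f(Q_j)=\log_\p\!\big(f(D_2)\big),
\]
which lies in $K_\p$ because $D_2$ is $K_\p$-rational; applying $\tr_\p$ and the factorization~\eqref{ellfact} of $\chi_\p$ on $K_\p^{\ast}$ then yields $h_\p(\dv(f),D_2)=\tr_\p\!\big(\log_\p f(D_2)\big)=\chi_\p(f(D_2))$.

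The remaining and most delicate point is symmetry, where the isotropy of $W_\p$ enters. I would prove a $p$-adic Riemann bilinear relation of the shape
\[
  \int_{D_2}\omega_{D_1}-\int_{D_1}\omega_{D_2}=\langle\omega_{D_1},\omega_{D_2}\rangle,
\]
where the right-hand side is the cup-product pairing on $\hdr(X_\p/K_\p)$, realized as a finite sum of residues of the third-kind forms against Coleman primitives in the spirit of Weil reciprocity. By the normalization, the classes of $\omega_{D_1}$ and $\omega_{D_2}$ entering this pairing lie in $W_\p$, which is isotropic, so the right-hand side vanishes; applying $\tr_\p$ gives $h_\p(D_1,D_2)=h_\p(D_2,D_1)$. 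Proving this bilinear identity cleanly---keeping track of all residue contributions and of the compatibility between Coleman primitives of differentials of the third kind and the cup product---is the step I expect to be the main obstacle, and for it I would follow \cite[Section~5]{Coleman-Gross-Heights}. Together with the previous paragraph, this shows that \eqref{hp} defines a local height pairing in the sense of Definition~\ref{def:local-height}.
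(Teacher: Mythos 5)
The paper gives no proof of this proposition --- it is imported verbatim from \cite[Proposition~5.2]{Coleman-Gross-Heights} --- and your outline is a faithful reconstruction of that source's argument: normalize $\omega_{D_1}$ using the splitting $\hdr(X_\p/K_\p)=H^0(X_\p,\Omega^1)\oplus W_\p$, verify bi-additivity, continuity and the divisor property formally from $\omega_{\dv(f)}=df/f$ together with the extended branch of $\log_\p$ in~\eqref{ellfact}, and obtain symmetry from the Coleman--Gross reciprocity law $\int_{D_2}\omega_{D_1}-\int_{D_1}\omega_{D_2}=\Psi(\omega_{D_1})\cup\Psi(\omega_{D_2})$ (with the cup product taken on the classes of the normalized forms, which lie in the isotropic subspace $W_\p$). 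The single step you defer --- the reciprocity identity itself --- is precisely the content of \cite[\S5]{Coleman-Gross-Heights}, which is also all the paper relies on, so your proposal matches the intended proof.
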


\section{Local heights for linear quadratic Chabauty on hyperelliptic
curves}\label{sec:heights_QC}
 In this section, we discuss the local heights that we use later on
 for our linear quadratic Chabauty method. 
We consider an even degree hyperelliptic curve $X\colon y^2
=f(x)$ over $K$ with $f\in \O_K[x]$ squarefree and monic.
Let $ \mathcal{U}=\Spec \O_K[x,y]/(y^2-f(x))$ and let
$\chi$ be a continuous idèle
class character~\eqref{idchar} such that $\chi_\p$ is ramified for at least
one prime $\p\mid p$ of $K$. We assume for simplicity that $p$ is totally split in $K$,
see the beginning of \S\ref{subsec:Thm-QC-OK-points}.

The key idea of any variant of the  quadratic Chabauty method is to use
bilinearity of the global height pairing and to analyze certain local
heights.  Different variants of the method use
different heights.
As discussed in the introduction, the local heights of interest in the present article are of the form 
$h_v(\infty_- - \infty_+, P-Q)$, with $P,Q\in X_v(K_v)$,
where $X_v= X\otimes K_v$.

\subsection{Local heights for linear Quadratic Chabauty: the ramified
case}\label{subsec:htsQCram}
Suppose that $\chi_\p$ is ramified and fix a complementary subspace $W_\p$. 
Extending and simplifying an earlier algorithm for odd degree hyperelliptic
curves due to Balakrishnan and Besser~\cite{BBHeights}, the authors show
in~\cite{p-adicheightsHC} how to compute $h_\p$ on $X_\p$, which we view as
a curve over $\Q_p$. 
We briefly describe how to compute 
$h_\p(\infty_- -\infty_+,R-S)$ for affine points $R, S\in X_\p(\Q_p)$.
Since the residue divisor of $\frac{x^gdx}{y}$ is precisely
$\infty_- -\infty_+$,~\cite[Corollary~3.14]{p-adicheightsHC} implies
\begin{equation}\label{eq:height-at-p-infinities}
  h_\p(\infty_-
  -\infty_+,R-S)=\tr_\p\left(\int_S^R\frac{x^gdx}{y}-\sum_{i=0}^{g-1} u_i
  \int_S^R\frac{x^idx}{y}\right)
\end{equation}
where $u_0,\ldots,u_{g-1}\in \Q_p$ are used to normalize 
$\omega_{\infty_- -\infty_+}$ with respect to $W_\p$; here $\tr_\p$ is as in~\eqref{ellfact}. Because of our assumption that $p$ is split in $K$, the map $\tr_\p$ is multiplication by a scalar. 
However,~\eqref{eq:height-at-p-infinities} remains true without this assumption.
The numbers $u_0,\ldots,u_{g-1}$ are defined in~\cite[Lemma~3.13]{p-adicheightsHC}; they depend only on  $X_\p$, $W_\p$ and the action of Frobenius on  
$\hdr(X_\p)$,
but
not on $R$ or $S$. We can compute  $u_0,\ldots,u_{g-1}$ 
efficiently using~\cite[\S3.2, Steps (vi), (vii)]{p-adicheightsHC}. The necessary Coleman
integrals can be computed efficiently using Balakrishnan's algorithm \cite{Jen-Even-Degree-CI}.

\subsection{Local heights for linear Quadratic Chabauty: the unramified
case}
\label{sec:heights_away}

Suppose that $\chi_\q$ is unramified.
One can compute 
$h_\q(\infty_- - \infty_+, P-Q)$ using algorithms from \cite{Hol12, Mul14, Raymond-David-Steffen}. 
We now find the possible values that 
$h_\q(\infty_- - \infty_+, P-Q)$ can take for $P,Q\in \calU(\O_\q)$. In
particular, there are only finitely many such values.
Recall from~\eqref{eq:intmult} that the local height on $X_\q$ can be
expressed in terms of intersection theory on a regular model. As
in~\cite{BBM1}, we use a specific regular model, constructed as follows:
Let $F(X,Z)$ be the degree~$2g+2$ homogenization of $f$. Then the equation
$Y^2=F(X,Z)$ defines a model of $X_\q$ in the weighted projective plane over
$\O_\q$, with weight~1 attached to $X$ and $Z$ and weight $g+1$ attached to $Y$.
Let $\mathcal{X}$ be  a strong desingularization  of this model (meaning
that this desingularization is an isomorphism outside the singular locus,
see \cite[Definition 8.3.39]{Liu}).
We denote the set of irreducible components of multiplicity~1 of its
special fiber $\mathcal{X}_\q$ by $V_1(\mathcal{X})$. Since all points in
$X(K_\q)$ reduce to a smooth point of $\mathcal{X}_\q$, there is a
natural specialization map
$$
\sp\colon X(K_\q)\to V_1(\mathcal{X}).
$$
Let $D(\mathcal{X})$ denote the free abelian group on $V_1(\mathcal{X})$
and let $M$ be the intersection matrix of $\mathcal{X}_\q$.
The Moore--Penrose pseudoinverse of $-M$ induces a bilinear pairing $L^+$
on $D(\mathcal{X})$.  

\begin{lemma}\label{L:away}\hfill
\begin{enumerate}[(a)]
    \item \label{L:away1}
Let $R,S\in \mathcal{U}(\O_\q)$. Then we have
$$
h_\q(\infty_--\infty_+, R-S) = L^+(\sp(\infty_-)-\sp(\infty_+),
\sp(R)-\sp(S))\chi_\q(\pi_\q)\,.
$$
\item\label{L:away2} The set
\[
T_\q = \{L^+(\sp(\infty_-)-\sp(\infty_+),
\sp(R)-\sp(S))\,\colon \,R,S\in \mathcal{U}(\O_\q)\}
\]
is finite.
\item\label{L:away3} If $\mathcal{X}_\q$ has only one component, then
  $T_\q=\{0\}$. In particular, this holds when $X_\q$ has good reduction. 
\item\label{L:away4} 
If $f$ is monic, $\q$ is not above~2 and if the reduction of $f$ is not a square, then
$T_\q= \{0\}$.
\end{enumerate}
\end{lemma}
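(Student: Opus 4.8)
The plan is to reduce everything to the intersection-theoretic formula~\eqref{eq:intmult} and then analyze the combinatorics of the special fiber $\mathcal{X}_\q$. For part~\eqref{L:away1}, I would start from Proposition~\ref{P:htaway}, which gives $h_\q(\infty_--\infty_+,R-S) = \chi_\q(\pi_\q)\cdot(\calD_1\cdot\calD_2)$ where $\calD_1,\calD_2$ are extensions of $\infty_--\infty_+$ and $R-S$ to $\mathcal{X}$ with vanishing intersection against all vertical divisors. Since $R,S\in\mathcal{U}(\O_\q)$ are integral points, their Zariski closures in $\mathcal{X}$ are horizontal divisors meeting $\mathcal{X}_\q$ in the single (multiplicity-one) components $\sp(R),\sp(S)$; similarly for $\infty_{\pm}$. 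The point is that to correct these naive closures into $\calD_1,\calD_2$ one adds a $\Q$-linear combination of vertical components, and the resulting intersection pairing is exactly the one induced by the Moore--Penrose pseudoinverse $L^+$ of $-M$: this is the standard computation (used in~\cite{BBM1} and in the algorithms of~\cite{Hol12,Mul14,Raymond-David-Steffen}), which I would cite rather than redo, noting only that $L^+$ is well-defined because the kernel of $M$ is spanned by the all-ones vector (the multiplicities), and the correction terms are orthogonal to that kernel.

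For part~\eqref{L:away2}, finiteness is immediate once~\eqref{L:away1} is in place: the specialization map $\sp\colon X(K_\q)\to V_1(\mathcal{X})$ has finite image since $V_1(\mathcal{X})$ is a finite set, so $\sp(R)-\sp(S)$ ranges over a finite subset of $D(\mathcal{X})$, and $L^+$ is a fixed bilinear pairing; hence $T_\q$ is finite.

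For part~\eqref{L:away3}, if $\mathcal{X}_\q$ has a single irreducible component then $V_1(\mathcal{X})$ is a single point (the whole special fiber, which has multiplicity one), so $\sp(\infty_-)-\sp(\infty_+)=0$ in $D(\mathcal{X})$ and the pairing vanishes identically. When $X_\q$ has good reduction, the model $Y^2=F(X,Z)$ is already smooth over $\O_\q$ (after desingularization nothing changes) and its special fiber is irreducible, so this case is subsumed. Part~\eqref{L:away4} is the one that needs a genuine argument: when $f$ is monic, the two points at infinity are $\infty_{\pm}=(1:\pm 1:0)$ in the weighted projective model, and the claim is that whenever $\sp(\infty_+)\ne\sp(\infty_-)$, they nonetheless lie in the same connected component of the dual graph in a way that forces $L^+(\sp(\infty_-)-\sp(\infty_+),\,\cdot\,)=0$ on the relevant classes — which happens precisely when the reduction $\bar f$ is not a square in $\F_\q[x]$. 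The strategy here is to examine the point $Z=0$ on $Y^2=F(X,Z)$: setting $Z=0$ gives $Y^2=X^{2g+2}$ (up to the monic leading term), so $Y=\pm X^{g+1}$, and the two branches through infinity are interchanged exactly when $\sqrt{F}$ fails to exist integrally, i.e.\ when $\bar f$ is not a square. In that case the single point at infinity on the (possibly singular) model resolves, after strong desingularization, to a configuration in which $\sp(\infty_+)$ and $\sp(\infty_-)$ are either equal or exchanged by the hyperelliptic involution on $\mathcal{X}_\q$; using that the hyperelliptic involution acts on $D(\mathcal{X})$ and fixes the pairing $L^+$ (it comes from an automorphism of the model), together with $\infty_-=\iota(\infty_+)$, one deduces $L^+(\sp(\infty_-)-\sp(\infty_+),\sp(R)-\sp(S))=0$ for all $R,S$.

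I expect part~\eqref{L:away4} to be the main obstacle, because it requires understanding the local structure of the desingularization $\mathcal{X}$ at the point(s) at infinity well enough to pin down $\sp(\infty_{\pm})$ and the action of $\iota$ on the exceptional components — in particular one must rule out that $\sp(\infty_+)$ and $\sp(\infty_-)$ land on distinct components not swapped by $\iota$. The cleanest route is probably to avoid a full resolution and instead argue directly with the pairing: show that $\sp(\infty_-)-\sp(\infty_+)$, when nonzero, is an eigenvector for $\iota$ with eigenvalue $-1$, while the antidiagonal part of $L^+$ contributes nothing because $\iota$ also sends $\sp(R)-\sp(S)$ to $\sp(\iota R)-\sp(\iota S)$ and $h_\q$ is $\iota$-invariant; combining $h_\q(\infty_--\infty_+,R-S)=h_\q(\iota(\infty_--\infty_+),\iota(R-S))=h_\q(\infty_+-\infty_-,\iota R-\iota S)$ with bilinearity forces the value to vanish whenever $\bar f$ is not a square, since then $\iota$ acts trivially on $\sp(\infty_+)$ and $\sp(\infty_-)$ individually is impossible, so they must coincide. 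The remaining routine check is that ``$\bar f$ not a square'' is exactly the condition under which $Y^2=\bar F(X,Z)$ has a single point above $Z=0$ rather than two, which follows from factoring $\bar F(X,0)=\bar f_{2g+2}X^{2g+2}$ and noting $\bar f_{2g+2}=1$ is a square while the geometric branches separate iff $\bar f$ itself is a square.
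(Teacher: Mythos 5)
Parts (a)--(c) of your proposal follow the paper's route. One thing you should make explicit in (a): the general intersection formula for $h_\q(P-Q,R-S)$ has \emph{two} terms, namely the intersection of the horizontal Zariski closures plus the $L^+$-correction coming from the vertical adjustment (see e.g.\ \cite[Corollary~7.5]{dJH15}); the formula in (a) holds in the stated form only because the first term vanishes here, since the closures of $\infty_\pm$ (which reduce into $Z=0$) are disjoint from the closures of the affine integral points $R,S$. Your write-up asserts that the corrected pairing ``is exactly the one induced by $L^+$'' without justifying why the horizontal term drops out, and this is precisely the point of restricting to $\infty_--\infty_+$ against $\calU(\O_\q)$-points. (Also, the kernel of $M$ is spanned by the multiplicity vector, not the all-ones vector, unless all multiplicities are $1$.)

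Part (d) contains a genuine error. You claim that the condition ``$\bar f$ is not a square'' governs whether the two points at infinity merge or are ``interchanged''; it does not. When $f$ is monic and the residue characteristic is odd, $\infty_\pm=(1:\pm1:0)$ are \emph{always} two distinct smooth points of the special fiber of $Y^2=\bar F(X,Z)$. What the squareness of $\bar f$ controls is the \emph{irreducibility of the special fiber}: if $\bar f=\bar g^2$ then $Y^2-\bar F$ factors as $(Y-\bar G)(Y+\bar G)$ and the special fiber splits into two components, with $\infty_+$ on $\{Y=\bar G\}$ and $\infty_-$ on $\{Y=-\bar G\}$, so $\sp(\infty_+)\neq\sp(\infty_-)$ and the pairing may be nonzero; if $\bar f$ is not a square, the special fiber of the naive model is irreducible, so there is a unique component of $\mathcal{X}_\q$ containing the images of all points of good reduction (the strong desingularization changes nothing outside the singular locus), and since $f$ monic forces both $\infty_\pm$ to reduce to smooth points, they specialize to that same component. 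Thus $\sp(\infty_-)-\sp(\infty_+)=0$ and (d) follows by the same one-line argument as (c). Your involution-based fallback does not close the gap: the symmetry $h_\q(\infty_--\infty_+,R-S)=-h_\q(\infty_--\infty_+,\iota R-\iota S)$ only relates the value at $(R,S)$ to the value at $(\iota R,\iota S)$ and does not force vanishing, and the concluding sentence (``$\iota$ acts trivially \dots is impossible, so they must coincide'') is not a valid deduction.
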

\begin{proof}
Let $P,Q,R,S\in X(K_\q)$ such that $\{P,Q\}\cap \{R,S\} = \varnothing$.  
  By~\eqref{eq:intmult} and
  by~\cite[Sections~2,~3]{Gross-local-heights}, the local height $h_\q(P-Q,  
  R-S)$ is equal, up to a constant, to the real-valued N\'eron-symbol
  between $P-Q$ and $R-S$. The latter is analyzed, for instance,
  in~\cite[\S7]{dJH15}. In particular, it follows from~\cite[
  Corollary~7.5, Proposition~7.4 and its proof]{dJH15} that
$h_\q(P-Q, R-S)$ is equal to
  \[\left((P_{\mathcal{X}}-Q_{\mathcal{X}})\cdot
  (R_{\mathcal{X}}-S_{\mathcal{X}}) +L^+(\sp(P)-\sp(Q),
\sp(R)-\sp(S))\right)\chi_\q(\pi_q)\,,
\]
  where the subscript $\mathcal{X}$ denotes the Zariski closure in
  $\mathcal{X}$ and the first term is the $\Q$-valued intersection multiplicity on
  $\mathcal{X}$. If
  $P=\infty_-, Q=\infty_+$ and if $R,S\in \calU(\O_\q)$, then this first term vanishes, proving~\eqref{L:away1}.
To prove~\eqref{L:away2}, it suffices to note that $V_1(\mathcal{X})$ is finite.
When $\infty_-$ and $\infty_+$ specialize to the same component in
  $V_1(\mathcal{X})$, then
$ L^+(\sp(\infty_-)-\sp(\infty_+), \sp(R)-\sp(S))$  is trivial. This proves~\eqref{L:away3}. It
  also proves~\eqref{L:away4}, since if $f$ does not reduce to a square,
  then there is a unique component of $\mathcal{X}_\q$ that contains the
  images of points of good reduction modulo $\pi_q$, and if $f$ is monic, then both points at infinity have good reduction.
\end{proof}
\begin{remark}\label{alg:htaway}
In practice, it is easy to determine the set $T_\q$ using \texttt{Magma}'s command {\tt RegularModel}, but the computation of the regular model does not always succeed. If it does, then the command
{\tt
PointOnRegularModel} can be used to find the components $$\sp(\infty_-),\;
  \sp(\infty_+),\; \sp(R),\; \sp(S),$$ so that we only need to compute all
  values $L^+(\sp(\infty_-)- \sp(\infty_+),
\Gamma -\Gamma')$, where $\Gamma$ and $\Gamma'$ run through $V_1(\mathcal{X})$.
\end{remark}

\begin{remark}\label{Tqsimple}
The set $T_\q$ plays a crucial part in our description of the linear quadratic Chabauty method for integral points below. We stress that it is much simpler to describe and to compute than the corresponding sets~\cite[(5.2)]{BBM1} in the odd degree case and that for many examples it is trivial.
\end{remark}

\section{Linear quadratic Chabauty for $\Z$-integral points}\label{sec:QC-over-Z}

In this section, we introduce the linear quadratic Chabauty method to
compute the integral points on certain even degree hyperelliptic curves
over $\Q$. Let $$X\colon y^2=f(x)=d_{2g+2}x^{2g+2}+\cdots+d_0$$ be a hyperelliptic  curve over $\Q$ such that $f\in \Z[x]$ is a squarefree polynomial 
of degree $2g+2$ and $d_{2g+2}$ is a square in $\Z$. Using the obvious
change of variables defined over $\Q$, we may and will assume that $f$ is
monic; integral points are preserved. 
We assume that the rank $r$ of the Mordell-Weil group $J(\Q)$ of the Jacobian $J/\Q$ of $X$ equals $g$ and that $p$ is a prime of good reduction such that the closure of $J(\Q)$ in $J(\Q_p)$ has finite index,
so that we cannot use the classical method of Chabauty and Coleman. 
We set 
\[
\mathcal{U}=\Spec \Z[x,y]/(y^2-f(x));
\]
our goal is to compute $\mathcal{U}(\Z)$, the set of integral points in $X(\Q)$.

\subsection{Linear quadratic Chabauty for $\Z$-integral points: theory} \label{subsec:QC-Z-integral}
We now state and prove our result on linear quadratic Chabauty for $\Z$-integral points, and we show how it implies Theorem \ref{T:QC-integrals-pts-Z}.

\begin{theorem}\label{T:QCZ-precise}
Let $X/\Q\colon y^2=f(x)$ be a hyperelliptic curve such that $f\in \Z[x]$
  is a squarefree polynomial of degree $2g+2>3$ whose leading coefficient
  is a square in $\Z$. Let $\omega_i\colonequals \frac{x^idx}{y}$ and let
  $J$ be the Jacobian of $X$. Suppose that $r\colonequals \rk(J/\Q) = g$
  and that the closure of $J(\Q)$ has finite index in $J(\Q_p)$.  Let $p$
  be a prime of good reduction for $X$ and let $h=\sum_v h_v$ be the
  Coleman-Gross $p$-adic height pairing with respect to the cyclotomic
  idèle class character and a complementary subspace $W_p\subset
  \hdr(X/\Q_p)$, isotropic with respect to the cup product pairing, which we assume to be given explicitly. Assume that we know an integral point $Q\in \mathcal{U}(\Z)$.
 \begin{enumerate}[(a)]
     \item\label{precise1} There are constants 
$\alpha_0,\ldots,\alpha_{g-1}\in \Q_p$ such that for all $a\in J(\Q)$ we have
\[
h(\infty_- - \infty_+,a)=\alpha_0\int_{a}\omega_0 +\cdots+\alpha_{g-1}\int_{a}\omega_{g-1}\,.
\]
\item\label{precise2}
The function $\rho\colon \calU(\Z_p)\to \Q_p$ defined by
\[
\rho(P)\colonequals  \alpha_0\int_{Q}^P\omega_0 +\cdots+ \alpha_{g-1}\int_{Q}^P\omega_{g-1} - h_p(\infty_--\infty_+,P-Q)
\]
is locally analytic, computable and not identically~0.
\item\label{precise3}
Let $T=\{\sum_{q\ne p}-l_q\log_p(q)\,\colon \, l_q\in T_q\}$, where $T_q$ is defined in Lemma~\ref{L:away}~\eqref{L:away2}. Then $T$ is finite and computable, and we have 
$\rho(P)\in T$ for all $P\in \mathcal{U}(\Z)$.
 \end{enumerate}
\end{theorem}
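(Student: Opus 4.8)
The plan is to prove the three parts in order, using bilinearity of the Coleman--Gross height throughout. For part~\eqref{precise1}, I would start from the hypothesis $r = g$ together with the observation already made in the introduction: either the functions $f_i(a) = \int_a \omega_i$, $i = 0,\dots,g-1$, span the space $\operatorname{Hom}(J(\Q)\otimes\Q, \Q_p)$, or they do not, and in the latter case the classical Chabauty--Coleman method applies, so we may assume they do. Since $\dim_{\Q_p}\operatorname{Hom}(J(\Q)\otimes\Q,\Q_p) = r = g$ and we have $g$ such functionals, they form a basis. The map $a \mapsto h(\infty_- - \infty_+, a)$ is $\Q_p$-linear on $J(\Q)$ by the bilinearity of $h$, hence lies in this space, and so can be written uniquely as $\sum_{i=0}^{g-1}\alpha_i f_i$ for some $\alpha_i \in \Q_p$. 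This gives \eqref{precise1}; I would note that the $\alpha_i$ are computable because one can evaluate both sides at $g$ independent points of $J(\Q)$ and solve the resulting linear system, the entries being computable Coleman integrals and computable local heights (using \cref{subsec:htsQCram} at $p$ and \cref{sec:heights_away} away from $p$).

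For part~\eqref{precise2}, local analyticity of $\rho$ follows because each $P \mapsto \int_Q^P \omega_i$ is locally analytic on $\calU(\Z_p)$ (it is a Coleman integral of a holomorphic form, with $\calU(\Z_p)\subseteq X(\Q_p)$), and $P \mapsto h_p(\infty_- - \infty_+, P - Q)$ is locally analytic by formula~\eqref{eq:height-at-p-infinities}, being a $\Q_p$-linear combination of Coleman integrals $\int_Q^P \omega_i$ plus the integral $\int_Q^P \frac{x^g\,dx}{y}$. Computability follows from the algorithms cited in \cref{subsec:htsQCram} for the $u_i$ and the Coleman integrals, together with the computability of the $\alpha_i$ from the previous paragraph. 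For non-vanishing: if $\rho$ were identically~$0$ on $\calU(\Z_p)$, then expanding $h_p$ via~\eqref{eq:height-at-p-infinities} would force $\sum_{i=0}^{g-1}(\alpha_i + u_i)\int_Q^P\omega_i = \int_Q^P\frac{x^g\,dx}{y}$ for all $P$ in a residue disc, hence an identity of locally analytic functions forcing $\frac{x^g\,dx}{y}$ to be a $\Q_p$-linear combination of $\omega_0,\dots,\omega_{g-1}$ in de~Rham cohomology — but $\omega_0,\dots,\omega_{g-1},\frac{x^g\,dx}{y}$ are part of a basis of $\hdr(X/\Q_p)$ (the standard basis of differentials of the second kind for an even-degree model), so this is impossible. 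I expect this non-vanishing argument to be the most delicate step, as one must argue carefully at the level of locally analytic functions on a residue disc and invoke the injectivity of the map from differentials to their Coleman primitives modulo constants; the hypothesis $p > g$ (ensuring the relevant integrals and the Frobenius structure behave well) enters here.

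For part~\eqref{precise3}, I would take $P \in \calU(\Z)$ and use the fact that the global height vanishes on principal divisors together with the decomposition $h = \sum_v h_v$. The key point: the divisor class of $\infty_- - \infty_+$ and the divisor class of $P - Q$ are both in $J(\Q)$, so by~\eqref{precise1},
\[
h(\infty_- - \infty_+, P - Q) = \sum_{i=0}^{g-1}\alpha_i \int_Q^P \omega_i,
\]
where I have used $\int_{[P-Q]}\omega_i = \int_Q^P\omega_i$. Subtracting the local term at $p$ from both sides of $h = h_p + \sum_{q\ne p} h_q$ gives exactly $\rho(P) = \sum_{q\ne p} h_q(\infty_- - \infty_+, P - Q)$. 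Now for each finite prime $q \ne p$, the point $P$ is in $\calU(\Z_q) = \calU(\O_q)$ and $Q \in \calU(\Z_q)$ as well, so Lemma~\ref{L:away}\eqref{L:away1} yields $h_q(\infty_- - \infty_+, P - Q) = \ell_q \cdot \chi_q(\pi_q)$ with $\ell_q \in T_q$, and by Lemma~\ref{L:away}\eqref{L:away3} we have $\ell_q = 0$ for all $q$ of good reduction, so the sum is finite. Since $\chi$ is the cyclotomic character, $\chi_q(q) = -\log_p(q)$ as recalled in \cref{subsec:intro-heights-icc}, and $\pi_q = q$, so $h_q(\infty_- - \infty_+, P - Q) = -\ell_q \log_p(q)$. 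Summing over the finitely many bad $q$ gives $\rho(P) = \sum_{q\ne p} -\ell_q \log_p(q) \in T$. Finiteness of $T$ is immediate since each $T_q$ is finite (Lemma~\ref{L:away}\eqref{L:away2}) and all but finitely many $T_q$ are $\{0\}$ (Lemma~\ref{L:away}\eqref{L:away3}), and computability of $T$ follows from \cref{alg:htaway}. Finally I would remark that Theorem~\ref{T:QC-integrals-pts-Z} is the immediate consequence of parts \eqref{precise2} and \eqref{precise3} together, once one observes that an arbitrary integral point $P$ differs from the assumed $Q$ by a class in $J(\Q)$ so that $\rho(P)$ is defined; if no integral point $Q$ is known a priori one replaces $Q$ by a rational base point and absorbs the finitely many extra local contributions into an enlarged $T$.
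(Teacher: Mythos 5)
Your proposal is correct and follows essentially the same route as the paper: part~(a) via the finite-index hypothesis making $f_0,\dots,f_{g-1}$ a basis of $(J(\Q)\otimes\Q_p)^{\vee}$, part~(b) via~\eqref{eq:height-at-p-infinities}, and part~(c) via $\rho(P)=\sum_{q\ne p}h_q(\infty_--\infty_+,P-Q)$ combined with Lemma~\ref{L:away}. Your explicit non-vanishing argument for $\rho$ (the coefficient of $\int_Q^P\frac{x^g\,dx}{y}$ is $-1$ and $\omega_0,\dots,\omega_g$ are independent) is a welcome elaboration of a step the paper leaves implicit; just note that $p>g$ is not a hypothesis of this theorem, and the Chabauty--Coleman dichotomy in your part~(a) is unnecessary since the finite-index assumption is already given.
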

\begin{proof}
   Since the closure of $J(\Q)$ has finite index in $J(\Q_p)$,
  the map $\log\colon J(\Q)\otimes\Q_p\to
  \mathrm{H}^0(X_{\Q_p},\Omega^1)^{\vee}$ is an isomorphism, so it follows that the linear maps
 $f_i\colon J(\Q)\rightarrow \Q_p$ given by $f_i(a)=\int_0^{a}\omega_i$ induce a basis of the space of linear maps $J(\Q)\otimes \Q_p\rightarrow
  \Q_p$. The map 
  $$\lambda(a)\colonequals h(\infty_- - \infty_+,a)$$
  belongs to this space, proving~\eqref{precise1}.

 Part~\eqref{precise2} follows from~\eqref{eq:height-at-p-infinities}.
For part~\eqref{precise3}, we recall that $T_q=\{0\}$ for almost all prime
  numbers $q\neq p$, by Lemma \ref{L:away} \eqref{L:away3}. Therefore, all
  sums in the definition of  the set $T$ are finite. Further, all $T_q$ are finite by Lemma \ref{L:away} \eqref{L:away2}, and computable, using, for example, algorithms from \cite{Raymond-David-Steffen}, so $T$ is finite and computable. For $P\in \mathcal{U}(\Z)$, we have 
\[
\rho(P)=h(\infty_- - \infty_+,P-Q)-h_p(\infty_- - \infty_+,P-Q)=\sum_{q\neq p}h_q(\infty_- - \infty_+,P-Q).
\]
Hence the final assertion of part~\eqref{precise3} follows from 
Lemma \ref{L:away} \eqref{L:away1}.
\end{proof}
\begin{remark}\label{r:no-int-point}

We can replace $\rho$ by the function
\[
\rho(P)\colonequals  \alpha_0\int_{\iota(P)}^P\omega_0 +\cdots+ \alpha_{g-1}\int_{\iota(P)}^P\omega_{g-1} - h_p(\infty_--\infty_+,P-\iota(P))
\]
if no integral point is known; Theorem~\ref{T:QCZ-precise} remains valid in this case.
\end{remark}
\begin{remark}\label{R:}
  If $p$ is a prime of good ordinary reduction, then there is a canonical
  choice for the subspace $W_p$, namely the unit root subspace with respect
  to Frobenius. However, our method works for any complementary 
  $W_p$ {that is isotropic with respect to the cup product pairing}; see also the discussion in~\cite[\S3.1, Step (iv)]{p-adicheightsHC}. 
\end{remark}

\begin{proof}[Proof of Theorem~\ref{T:QC-integrals-pts-Z}]
Suppose that $X/\Q$ satisfies the conditions of Theorem~\ref{T:QC-integrals-pts-Z}.
Theorem~\ref{T:QC-integrals-pts-Z} follows at once from
  Theorem~\ref{T:QCZ-precise} and Remark~\ref{r:no-int-point} if the
  closure of $J(\Q)$ has finite index in $J(\Q_p)$.
By construction, the function $\rho$ is not identically~0. 
  If this condition does not hold, we can use Chabauty--Coleman to compute
  $X(\Q)$ and thus $\mathcal{U}(\Z)$.
\end{proof}

\subsection{Computing $\Z$-integral points using linear quadratic Chabauty} \label{subsec:QC-alg-Z-integral} We keep the notation of~\S\ref{subsec:QC-Z-integral} and we assume
that the conditions of Theorem~\ref{T:QCZ-precise} are satisfied. Then we
get the 
following algorithm for the
computation of $\mathcal{U}(\Z)$. 
We denote by $\mathrm{red}\colon X(\Q_p)\longrightarrow X(\F_p)$ the map that  sends $P$ to $\mathrm{red}(P)=\bar{P}$, its reduction modulo $p$. A well-behaved uniformizer at a point $P\in X(\Q_p)$ is a
uniformizer $t_P$ at $P$ that reduces to a uniformizer at 
$\bar{P}\in X(\F_p)$. Choosing a
well-behaved uniformizer at any point of the residue disc 
\begin{equation*}\label{resdisc}
  D(P) = D(\bar{P})  \colonequals \mathrm{red}^{-1}(\bar{P})
\end{equation*}
induces a bijection of $D(P)$ with $p\Z_p$ (see for instance~\cite{Lorenzini-Tucker}).

\begin{alg}\label{alg:QC-Z}{\bf (Linear quadratic Chabauty for integral points)}

  \begin{itemize}
    \item \textbf{Input}: 
      \begin{itemize}
        \item       A hyperelliptic curve $X/\Q\colon y^2=f(x)$,
      where $f\in \Z[x]$ is a monic squarefree polynomial of degree
      $2g+2>3$ such that the
      Mordell--Weil rank $r$ of the Jacobian $J$ of $X$ over $\Q$ is equal
      to $g$; 
    \item  a prime $p$ of good reduction for $X$ and such that the $p$-adic closure of $J(\Q)$ has finite index
      in $J(\Q_p)$;
    \item {a complementary subspace $W_p\subset
  \hdr(X/\Q_p)$, isotropic with respect to the cup product pairing}. 
      \end{itemize}
    \item \textbf{Output}: $\mathcal{U}(\Z)$, where 
$\mathcal{U}=\Spec \Z[x,y]/(y^2-f(x))$,  or FAIL.
  \end{itemize}
\begin{enumerate}[(i)]
\item\label{a:qc-small} Find all integral points $P\in \mathcal{U}(\Z)$ up to some height bound.
\item\label{a:qc-q-heights} For each bad prime $q$, compute the set $T_q$ using Remark~\ref{alg:htaway}.
\item\label{a:qc-hts}
Find $g$ independent points $a_1,\ldots, a_g\in J(\Q)$ and compute
    $h(\infty_- - \infty_+,a_i)$\\ for $1\le i \le g$ using the
    algorithms from 
    {\cite[\S\S3.1, 3.2]{p-adicheightsHC}} for $h_p$, and using one of the algorithms from~\cite{Hol12, Mul14, Raymond-David-Steffen}
for $h_q$ for primes $q\neq p$. 
\item\label{a:qc-express-linear-maps} 
Using Step~\eqref{a:qc-hts}, compute
$\alpha_0,\ldots,\alpha_{g-1}\in \Q_p$ such that for all $a\in J(\Q)$ we have
\[
h(\infty_- - \infty_+,a)=\alpha_0\int_{a}\omega_0 +\cdots+\alpha_{g-1}\int_{a}\omega_{g-1}.
\]
Define the function $\rho\colon \calU(\Z_p)\to \Q_p$
    by
\[
\rho(P)\colonequals  \alpha_0\int_{Q}^P\omega_0 +\cdots+ \alpha_{g-1}\int_{Q}^P\omega_{g-1} - h_p(\infty_--\infty_+,P-Q).
\]
\item\label{a:qc-rho-in-z} In each affine residue disc, fix a point $P_0\in
  X(\Q_p)$. Use a well-behaved uniformizer $t_{P_0}=t_{P_0}(z)$ at $P_0$ to
    parametrize the points $P$ inside this residue disc and express  $\rho(P)$ as a power series in $z$. 
\item\label{a:qc-solve-in-z} For each $\tau\in T$, find the set $A_{\tau}\subset
  \calU(\Z_p)$ of all points corresponding to a zero of $\rho(z)=\tau$. 
\item\label{a:qc-mw-sieve} If $A\colonequals \bigcup_{\tau\in T}A_{\tau}$ does not equal the set computed in Step~\eqref{a:qc-small}, use the Mordell-Weil sieve 
to separate which points in $A$ are rational and which are not as
    in~\cite{BBM2}. If this does not succeed return FAIL. Otherwise 
    return the integral points among the remaining ones.
\end{enumerate}
\end{alg}

\begin{remark}\label{r:QC-over-Z-algorithm-remark}\hfill
\begin{enumerate}[(a)]
\item In Step~\eqref{a:qc-hts}, we preferably choose points in $J(\Q)$
  represented by divisors whose support consists only of points in
    $\calU(\Z)$. However, it is still possible to compute $\alpha_0,\ldots,\alpha_{g-1}$ if we need to use other representatives.
\item 
We compute the power series in Step~\eqref{a:qc-rho-in-z} as follows. We write 
\begin{equation}\label{eq:express-as-power-series}
\rho(P)=  \alpha_0\int_{Q}^{P_0}\omega_0 +\cdots+ \alpha_{g-1}\int_{Q}^{P_0}\omega_{g-1} - h_p(\infty_--\infty_+,P_0-Q)
\end{equation}
\[\qquad\qquad
+\alpha_0\int_{P_0}^P\omega_0 +\cdots+ \alpha_{g-1}\int_{P_0}^P\omega_{g-1} - h_p(\infty_--\infty_+,P-P_0).
\]
All the quantities in the first line of~\eqref{eq:express-as-power-series} are independent of $P$, and we have
    already explained how to compute them. Recalling that we have 
\[
h_p(\infty_--\infty_+,P-P_0)= \int_{P_0}^P\omega_g-\left( \beta_0\int_{P_0}^P\omega_0 +\cdots+ \beta_{g-1}\int_{P_0}^P\omega_{g-1}\right)
\]
for some coefficients $\beta_0,\ldots,\beta_{g-1}\in \Q_p$, all the integrals that appear in the second line of~\eqref{eq:express-as-power-series} are tiny integrals, (see \cite[\S3.2 and Algorithm 3]{Jen-Even-Degree-CI}). As tiny integrals are computed integrating term by term,  if a point $P$ is described using a parameter $z$, the function $\rho$ is indeed expressed as a power series in $z$.
If we use $\rho$ as suggested in Remark \ref{r:no-int-point}, then we compute the integral $\int_{\iota(P)}^P \omega_i$ for any $0\leq i \leq g$ as
\[\int_{\iota(P)}^P \omega_i = \int_{\iota(P)}^{\iota(P_0)}\omega_i+\int_{\iota(P_0)}^{P_0}\omega_i+\int_{P_0}^P \omega_i= \int_{\iota(P_0)}^{P_0}\omega_i+2\int_{P_0}^P \omega_i\,.\]

\item We can get a smaller set $A$ of zeros by noting that 
the sets $T_q$ are actually larger than necessary. In fact, it suffices to compute the set of all values
\[L^+(\sp(\infty_-)- \sp(\infty_+),
\Gamma -\sp(Q))\,,
\]
where $\Gamma$ runs through $V_1(\mathcal{X})$.
\item\label{r:no-integral-points-algorithm} Recall from Remark~\ref{r:no-int-point} that we do not need
to have an integral point $Q$ 
    available, since we can use the function
\[
\rho(P)\colonequals  \alpha_0\int_{\iota(P)}^P\omega_0 +\cdots+
    \alpha_{g-1}\int_{\iota(P)}^P\omega_{g-1} -
    h_p(\infty_--\infty_+,P-\iota(P))\,,
\]
where $\iota$ is the hyperelliptic involution.
In this case, we could get smaller sets $T_q$ by computing how $\iota$ acts
    on the special fiber $\mathcal{X}_q$, but we have not implemented this.
\end{enumerate}
\end{remark}

We have implemented Algorithm~\ref{alg:QC-Z}, and our code is available
from~{\url{https://github.com/steffenmueller/LinQC}}. Most of our implementation is
in {\tt SageMath}~\cite{Sage},
but Step~\eqref{a:qc-q-heights} and the
computation of local heights at $q\ne p$ in Step~\eqref{a:qc-hts} are implemented in {\tt
Magma}~\cite{Magma}, since they require the computation of a regular model. 

\subsection{Precision estimates} \label{subsec:prec-qc-for-Z}
We now analyze the loss of precision in Algorithm~\ref{alg:QC-Z}. Steps
\eqref{a:qc-small} and \eqref{a:qc-q-heights} are exact. When computing heights away from
$p$, we get $\Q$-valued intersection numbers, multiplied by $\log_p(q)$ (or, more
generally, $\chi_p(q)$), which we can compute to any desired precision. Thus,  the
precision analysis for Step \eqref{a:qc-hts} depends only on the precision analysis for the local height at $p$,
which {is discussed in \cite[\S3.4]{p-adicheightsHC}}; there we also
analyze precision for the Coleman integrals in Step
\eqref{a:qc-express-linear-maps}. The precision in Step \eqref{a:qc-express-linear-maps}
also depends on $k\colonequals \ord_p(\det N)$, where $N$ is the matrix
$N=\left(\int_{a_j}\omega_i\right)_{i,j}$ 
because we use
the inverse of $N$ to compute the constants $\alpha_0,\ldots,\alpha_{g-1}$. Thus, if we
want to compute $\alpha_0,\ldots,\alpha_{g-1}$ up to precision $p^m$, we have to compute
all previous quantities up to precision $p^{m+k}$. Note that $k$ depends only on the set
$\{a_1,\ldots,a_g\}$, which we can choose. Therefore, we can precompute the value $k$ and
deduce the required precision for the other parts of the algorithm. 
In Step \eqref{a:qc-rho-in-z} we consider tiny integrals of differentials which are
holomorphic in these residue discs, where one endpoint is given by a parameter. Hence the
possible loss of precision can be bounded as in~\cite[Proposition 18]{BBK}.
Since Step
\eqref{a:qc-mw-sieve} uses exact information at primes $v\ne p$, it does not require a precision analysis.

In Step \eqref{a:qc-solve-in-z}, we locate the zeros in the unit disc of a $p$-adic power series
$F$, truncated at
finite level $M$ and with coefficients correct to $m$ digits of $p$-adic precision.
The extent to which the resulting zeros of this polynomial coincide with the zeros of the
power series itself depends on $m$, $M$, and on the
valuations of the coefficients of $F$. If we have a lower bound for these valuations, then
we can use \cite[Proposition~3.11]{CCExperimentsGenus3} to determine the minimum values of $m$
and $M$ so that the zeros are provably correct to the desired
precision. 
In general, it can be difficult to obtain bounds on these valuations (see for
instance~\cite[Section~4]{BDMTV2}).
Fortunately, in our case $F$ is of a rather simple nature: up to constants, it is a linear combination of
tiny integrals of differentials $\omega_i$ for $i\in \{0,\ldots,g\}$ in
residue discs
where these do not have poles. Hence the only negative valuations come from integrating
terms of the form $a_jz^j$, where $p\mid(j+1)$, as in~\cite[Lemma~3.10]{CCExperimentsGenus3}.

\subsection{Example: A curve of genus 2}\label{subsec:monic_ex}

We illustrate the method from \cref{subsec:QC-alg-Z-integral} with the following example. Consider the curve $X/\Q$ defined by
\[
X\colon y^2 = f(x) = x^6 + 2x^5 - 7x^4 - 18x^3 + 2x^2 + 20x + 9
\]
with {\tt LMFDB}-label {\tt 6081.b.164187.1}. By
\url{https://www.lmfdb.org/Genus2Curve/Q/6081/b/164187/1}, the rank of its
Jacobian $J$ over $\Q$ is 2. Hence we may apply linear quadratic Chabauty, and we will use this to prove that the set of integral points on $X$ is
\begin{equation}\label{known-pts}
\{(0,\pm3),(1,\pm3),(-1,\pm1),(-2,\pm3),(-4,\pm37)\}.
\end{equation}
Since the geometric endomorphism ring is $\Z$, the quadratic Chabauty method for rational points is not applicable to this curve, and its rational points have not been determined.
Fix $Q=(-1,1)\in \calU(\Z)$. Let $p$ be a prime of good ordinary reduction
for $X$. Using Lemma \ref{L:away}, we compute that for any prime $q\neq p$
and any $P\in\calU(\Z)$, we have 
\[
h_q(\infty_- - \infty_+, P-Q)=0.
\] 
Hence we have $T=\{0\}$.
Namely, for $q\ne 2$, $f$ is not a square modulo $q$. At~2, the given model is not minimal at 2;  the minimal model
$$X^{\min}\colon y^2 + (x^3 + x^2 + 1)y = -2x^4 - 5x^3 + 5x +     2$$
is smooth at~2 and any isomorphism $X\to X^{\min}$ maps 2-integral points to 2-integral points. Therefore the local height at~2 is trivial as well. 

The next step is to express the Coleman-Gross height in terms of Coleman
integrals, i.e., to compute $\alpha_0,\alpha_1$ from Algorithm
\ref{alg:QC-Z}, Step \eqref{a:qc-express-linear-maps}. We first need to
compute $h(\infty_- - \infty_+,a_1)$ and $h(\infty_- - \infty_+,a_2)$ for two
linearly independent $a_1,a_2\in J(\Q)$. As explained 
in Remark \ref{r:QC-over-Z-algorithm-remark}~(a), we prefer to use two $\Z$-linearly independent elements of
$J(\Q)$ that have a representative of the shape $P-R$ for $P,R\in \calU(\Z)$ because then it is easier to compute the heights\- $h(\infty_- - \infty_+,P-R)$. We can indeed find such representatives for this curve. Namely, if we denote $Q_0=(0,3)$ and $Q_1=(1,3)$, then $[Q_0-Q]$ and $[Q_1-Q]$ are $\Z$-linearly independent in $J(\Q)$.

By the above, we have $h(\infty_- - \infty_+,P-R)=h_p(\infty_- -
\infty_+,P-R)$ for $P,R\in \calU(\Z)$. Taking $P=Q_0, R=Q$, and $P=Q_1,
R=Q$, respectively, we get two linear equations: 
\[
h(\infty_- -\infty_+,Q_i-Q)=h_p(\infty_-
-\infty_+,Q_i-Q)=\alpha_0\int_Q^{Q_i}\omega_0+\alpha_1\int_Q^{Q_i}\omega_1\,,\quad
i=0,1\,.
\]

We use {the good, ordinary prime $p=7$, and the unit root subspace
$W_7$}. Solving the system above, we compute 
\[
\alpha_0=5 + 4\cdot7 + 6\cdot7^2 + 7^3 + 6\cdot7^4 + O(7^5),\;
\alpha_1=6 + 3\cdot7 + 5\cdot7^2 + 3\cdot7^3 + 4\cdot7^4 + O(7^5)\,.
\]

Now we define the function $\rho\colon \calU(\Z_p)\to \Q_p$ by \[
\rho(P)=\alpha_0\int_{Q}^P\omega_0 + \alpha_1\int_{Q}^P\omega_1 -
h_p(\infty_--\infty_+,P-Q)\,.
\] 

As explained in Step~\eqref{a:qc-rho-in-z} of Algorithm \ref{alg:QC-Z}, in each residue disc,
we analyze the zeros of $\rho$ by fixing a point $P_0$ and then choosing   a
well-behaved uniformizer at $P_0$ to establish a bijection between the
residue disc $D(P_0)$  and $7\Z_7$.
All affine residue discs modulo 7 contain exactly one known integral point
from~\eqref{known-pts}. Hence, for each residue disc, we choose that
integral point as a representative. To prove that this integral point is
the only one in its residue disc, it suffices to show that $z=0$ is the only zero
of the power series expansion of $\rho$ in the disc. 
This indeed happens for all known integral points on $X$. 

For example, let us consider the point $Q_0=(0,3)$. All points in $D(Q_0)$ have $x$-coordinate equal to $7z$ for some $z\in \Z_7$. We compute that 
\[
\rho(z)=(6\cdot 7 + O(7^2))z + (6\cdot 7^3 +  O(7^4))z^2 + (6\cdot 7^3 +
O(7^4))z^3 + (7^4+O(7^5))z^4 + O((7z)^5)\,.\]
By Strassmann's theorem (see, for example, \cite[Theorem 4.2.4]{Gouvea-p-adic-numbers}), this power series has at most one zero. Since we already know that $z=0$ is a zero, we conclude that $Q_0$ is the only integral point on $X$ inside its residue disc. The same argument applies for all integral points and shows that the set of known integral points \eqref{known-pts} is the complete set of integral points on $X$.
The code for this example can be found
at~{\url{https://github.com/steffenmueller/LinQC}}.

\section[Linear quadratic Chabauty over number fields]{Linear quadratic Chabauty for integral points over number fields}\label{sec:QC-NF}

Let $K$ be a number field of degree $d$ with signature $(r_1,r_2)$; we
denote the rank of its unit group by $r_K=r_1+r_2-1$. In this section we
adapt the strategy from Section~\ref{sec:QC-over-Z} to compute the
$\O_K$-integral points on certain even degree hyperelliptic curves over $K$. Let
$X/K\colon y^2=f(x)$ be a hyperelliptic curve of genus $g$ such that $f\in \O_K[x]$
has no repeated roots. 
Let $J$ be the Jacobian of
$X$.
As in Section~\ref{sec:QC-over-Z}, denote 
\[
\mathcal{U}=\Spec\O_K[x,y]/(y^2-f(x));
\]
then $\mathcal{U}(\O_K)$ is the set of $\O_K$-integral points on $X$. 
The restriction of  scalars $\Res_{K/\Q}(X)$ is a $d$-dimensional variety,
so we  need at least $d$ linear conditions to cut out $\calU(\O_K)$.

Siksek~\cite{Siksek-Chabauty-NF} has extended Chabauty--Coleman to curves over number fields by using
all primes of $K$ above a fixed prime $p$ and 
the $gd$-dimensional variety $\Res_{K/\Q}(J)$, which has $gd$ linear
functionals to $\Q_p$,
given in terms of Coleman integrals.
Hence, if $$\rk(J/K)\le gd-d\,,$$ we can construct $d$ linear equations
satisfied by $X(K)$ in terms of Coleman integrals. While these equations
often cut out a finite set in practice, this is not always the case. For
instance, if $X$ is defined over $\Q$ and does not satisfy the Chabauty
rank condition over $\Q$, this method cannot succeed over any number field $K$, even if $X$ satisfies the rank condition over $K$. In addition, the method sometimes does not work for more subtle reasons, as explained in \cite{Dogra-Unlikely-Intersections-Chabauty-Kim}.
For more work on Chabauty methods based on restriction of scalars,
see~\cite{Tri}.

In~\cite{BBBM-QCNF}, Balakrishnan et al. extend the quadratic Chabauty
method from~\cite{BBM1} and combine it with Siksek's approach when $f\in
\O_K[x]$ is monic and of odd degree. 
One can show that there are at least $r_2+1$ linearly independent $p$-adic
height pairings (see~\cite[Section~2.2]{BBBM-QCNF}), corresponding to independent continuous $p$-adic idèle class characters of $K$, and each of these leads to an additional equation satisfied by the integral points. Hence the quadratic Chabauty method has a chance to work when
\begin{equation}\label{eq:rk_K}
    \rk(J/K)\leq dg-d+r_2+1=dg-r_K.
\end{equation}
As in Siksek's work,
a necessary condition for the method to cut out a finite set is that~\eqref{eq:rk_K} is satisfied for all subfields of $K$ over which $X$ is defined.

\subsection{Linear quadratic Chabauty for integral points over number fields:
theory}\label{subsec:Thm-QC-OK-points}
In the remainder of this section, we extend the linear quadratic Chabauty method introduced in
the previous section to the case where 
$f\in \O_K[x]$ is monic and of even degree $2g+2$. 
We assume \eqref{eq:rk_K} throughout.
For practical reasons, we consider a prime $p\in \Z$ of good reduction for
$X$ that splits completely in $K$, say $p\O_K=\p_1\cdots\p_d$. Otherwise, we
would have to compute Coleman integrals over proper extensions of $\Q_p$,
which is currently not implemented (this is work in progress due to
Best, Kaya and Keller).
However, as in~\cite{BBBM-QCNF}, our results
remain true for primes that do not split completely.

Denote by $\sigma_i$ the embedding $\sigma_i\colon
K\xhookrightarrow{}K_{\p_i}\simeq \Q_p$, for $1\leq i\leq d$. We obtain an
embedding $\sigma\colon K\xhookrightarrow{} K\otimes\Q_p\simeq \Q_p^d$
defined by $\sigma(u)=(\sigma_1(u),\ldots,\sigma_d(u))$ for $u\in K$. Via
this embedding, we identify $X(K\otimes\Q_p)\simeq X(\Q_p)^d$; there are
induced embeddings $\sigma\colon X(K)\xhookrightarrow{} X(K\otimes\Q_p)$
and $\sigma\colon \calU(\O_K)\xhookrightarrow{} \calU(\O_K\otimes\Z_p)$.
We also obtain embeddings $\sigma_i\colon J(K)\xhookrightarrow{} J(\Q_p)$,
which lead to $dg$ linear maps $J(K)\to\Q_p$ given by \[
\ell_{j,i}(a)=\int_{\sigma_i(a)}\omega_j,
\]
for $i\in\{1,\ldots,d\}$ and $j\in\{0,\ldots,g-1\}$. We also denote the
base-changed linear maps $\ell_{j,i}\colon J(K)\otimes \Q_p\to
\Q_p$. Because of our assumption \eqref{eq:rk_K} there are at least $r_K$ linear relations between the $\ell_{j,i}$'s in the space $(J(K)\otimes \Q_p)^{\vee}$.

As in \cite[Condition 4.1]{BBBM-QCNF}, from now on, we assume the following condition.

\begin{cond}\label{cond4}
We have $\langle \ell_{j,i} \rangle_{1\leq i\leq d, 0\leq j \leq g-1}=(J(K)\otimes \Q_p)^{\vee}$.
\end{cond}

We also recall that for each continuous idèle class character $\chi\colon
\mathbb{A}_K^{\ast}/K^{\ast}\to \Q_p$, there is a corresponding $p$-adic
height $h^{\chi}$, {depending on a choice of complementary 
subspace $W_\p \subset \hdr(X_\p/K_\p)$ (isotropic with respect to the cup
product) for each $\p\mid p$ such that
$\chi_\p$ is ramified. }
From now on, we assume that $\chi$ is ramified
over at least one prime $\p$ of $K$ above $p$ (see
\cref{subsec:intro-heights-at-p}) {and, for all $\p\mid p$, we take the same $W_\p$ for each
$\chi$ such that $\chi_\p$ is ramified}. Since we assume
Condition \ref{cond4}, we can express the linear map \[\lambda^{\chi}\colon
J(K)\to \Q_p, \hspace*{2mm}\lambda^{\chi}(a)=
h^{\chi}(\infty_--\infty_+,a),\] extended linearly to $J(K)\otimes \Q_p\to \Q_p$, as 
\begin{equation}\label{eq:heights-from-idele}
\lambda^{\chi}=\sum_{j=0}^{g-1}\sum_{i=1}^d \alpha_{j,i}^{\chi}\ell_{j,i}, \text{ for some $\alpha_{j,i}^{\chi}\in \Q_p$.}    
\end{equation}
We decompose $h^{\chi}\colon J(K)\to \Q_p$ into two parts 
\[h^{\chi}(a)=\sum_{\mathfrak{p}\mid p}h^{\chi}_{\mathfrak{p}}(D)+\sum_{\mathfrak{q}\nmid p}h^{\chi}_{\mathfrak{q}}(D)\,,\] 
where $D\in \Div^0(X)$ represents $a \in J(K)$.

For $D\in \Div^0(X)$ such that $\infty_{\pm}\notin \supp(D)$, we can rewrite~\eqref{eq:heights-from-idele} as
\begin{equation*}\label{eq:quadratic-chabauty-O_K-equations-from-heights}
\sum_{\mathfrak{q}\nmid p}h^{\chi}_{\mathfrak{q}}(\infty_- - \infty_+,D)=\sum_{j=0}^{g-1}\sum_{i=1}^d \alpha_{j,i}^{\chi}\int_{\sigma_i(D)}\omega_j-\sum_{i=1}^d h^{\chi}_{\mathfrak{p}_i}(\infty_--\infty_+,\sigma_i(D)).    
\end{equation*}

Suppose that there is an $\O_K$-integral point $Q\in \mathcal{U}(\O_K)$.
Then, as in Section~\ref{sec:QC-over-Z}, for $\mathfrak{p}\mid p$, the
function $P\mapsto h^{\chi}_{\mathfrak{p}}(\infty_--\infty_+,P-Q)$ is
locally analytic on $\calU(\O_{K_{\mathfrak{p}}})$, since it is given by
Coleman integrals. Moreover, by Lemma~\ref{L:away} for $\mathfrak{q}\nmid
p$, the function $$P\mapsto h^{\chi}_{\mathfrak{q}}(\infty_- -
\infty_+,P-Q)$$ takes values in a computable finite set $T_{\mathfrak{q}}$ when $P\in \mathcal{U}(\O_{K_{\q}})$, and $T_{\mathfrak{q}}=\{0\}$ for primes $\mathfrak{q}$ of good reduction. 
Hence we get $r_2+1$ relations 
\begin{equation*}\label{eq:quadratic-chabauty-O_K-equations-from-heights_p}
\sum_{j=0}^{g-1}\sum_{i=1}^d \alpha_{j,i}^{\chi}\int^{\sigma_i(P)}_{\sigma_i(Q)}\omega_j-\sum_{i=1}^d h^{\chi}_{\mathfrak{p}_i}(\infty_--\infty_+,\sigma_i(P)-\sigma_i(Q)) \in T_{\chi}
\end{equation*}
for $P\in \mathcal{U}(\O_K)$,
where $T_{\chi}$ is a computable finite set.
Together with $r_K$ linear relations of the shape 
\begin{equation}\label{eq:quadratic-chabauty-O_K-equations-from-Coleman-integrals}
  \sum_{j=0}^{g-1}\sum_{i=1}^d \beta_{j,i}^{(k)}\int_{\sigma_i(a)}\omega_j=0,    
\end{equation}
for $1\leq k\leq r_K$, we form a number of systems of $d$ linear equations
which we try to use to determine $\mathcal{U}(\O_K)$.

Using the above, we obtain the following generalization of Theorem \ref{T:QCZ-precise}.

\begin{theorem}\label{T:QC-NF-integrals-pts-full-statement} 
Let $X/K\colon y^2=f(x)$ be a hyperelliptic curve such that $f\in \O_K[x]$
  is monic, squarefree, and has degree $2g+2$. Suppose $\rk(J/K)\leq
  dg-r_K$. Let $p\in \Z$ be a prime that splits in $K$ such that all primes
  above $p$ in $K$ are primes of good  reduction for $X$. Assume
  Condition~\ref{cond4} and that there is point $Q\in
  \mathcal{U}(\O_K)$. 
  Fix independent continuous idèle
    class characters $\chi_{r_K+1},\ldots, \chi_{d}$ {and, for each
    prime $\p\mid p$, a complementary subspace $W_\p \subset \hdr(X_\p/K_\p)$  that is
  isotropic with respect to the cup product.}

Then there are $d$ computable locally analytic functions
  $\rho_1,\ldots,\rho_d\colon \calU(\O_K\otimes \Z_p)\rightarrow \Q_p$ with the following properties.
\begin{enumerate}[(i)]
  \item For $1\leq k\leq r_K$ and $\mathcal{P}=(P_1,\ldots,P_d)\in
    \calU(\O_K\otimes \Z_p)$, we have
\[
\rho_k(\mathcal{P})=\sum_{j=0}^{g-1}\sum_{i=1}^d
    \beta_{j,i}^{(k)}\int_{\sigma_i(Q)}^{P_i}\omega_j
\]
and $\rho_k(\mathcal{P})=0$ for $\mathcal{P}\in \sigma(\calU(K))$.
\item For $r_K+1\leq k\leq d$ and $\mathcal{P}=(P_1,\ldots,P_d)\in
    \calU(\O_K\otimes \Z_p)$, 
  we have 
\[
\rho_k(\mathcal{P})=\sum_{j=0}^{g-1}\sum_{i=1}^d
    \alpha_{j,i}^{\chi_{k}}\int_{\sigma_i(Q)}^{P_i}\omega_j-\sum_{i=1}^d
    h^{\chi_{k}}_{\mathfrak{p}_i}(\infty_--\infty_+,P_i-\sigma_i(Q))\,;
\]
     moreover, 
there is a finite and computable set $T_k$ such that
    $\rho_k(\mathcal{P})\in T_k$ for all $\mathcal{P}\in
    \sigma(\mathcal{U}(\O_K))$.
\end{enumerate}
\end{theorem}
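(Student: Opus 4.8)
The plan is to reduce Theorem~\ref{T:QC-NF-integrals-pts-full-statement} to the ingredients already assembled in this section, essentially by unwinding definitions and invoking Theorem~\ref{T:QCZ-precise} componentwise for the height part and Siksek's restriction-of-scalars setup for the Coleman-integral part. First I would fix, once and for all, the coefficients: under Condition~\ref{cond4} the linear functionals $\ell_{j,i}$ span $(J(K)\otimes\Q_p)^\vee$, so for each of the chosen characters $\chi_k$ with $r_K+1\le k\le d$ the map $\lambda^{\chi_k}(a)=h^{\chi_k}(\infty_--\infty_+,a)$ lies in this span, yielding the constants $\alpha_{j,i}^{\chi_k}$ of Equation~\eqref{eq:heights-from-idele}; likewise, since $\rk(J/K)\le dg-r_K$, the functionals $\ell_{j,i}$ satisfy at least $r_K$ independent linear relations, and choosing $r_K$ of them gives the $\beta_{j,i}^{(k)}$ of Equation~\eqref{eq:quadratic-chabauty-O_K-equations-from-Coleman-integrals}. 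With these constants in hand, I would simply \emph{define} $\rho_1,\ldots,\rho_d$ by the formulas in the statement.

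The two properties are then checked separately. For $1\le k\le r_K$, property~(i) is immediate from the definition of $\rho_k$ as a fixed $\Q_p$-linear combination of the $\ell_{j,i}$ evaluated along $\sigma_i(P_i-Q)$; local analyticity follows because Coleman integrals with one varying endpoint are locally analytic, and the vanishing on $\sigma(\calU(K))$ follows because for a genuine $\O_K$-point $P$ the tuple $(\sigma_1(P),\ldots,\sigma_d(P))$ is $\sigma(P)$ for a single class $[P-Q]\in J(K)$, on which the chosen relation vanishes by construction. For $r_K+1\le k\le d$, property~(ii) is the number field analogue of Theorem~\ref{T:QCZ-precise}\eqref{precise2}--\eqref{precise3}: using the decomposition $h^{\chi_k}=\sum_{\p\mid p}h^{\chi_k}_\p+\sum_{\q\nmid p}h^{\chi_k}_\q$ and Equation~\eqref{eq:height-at-p-infinities} applied at each $\p_i$ (valid since $p$ splits completely and $X$ has good reduction above $p$), one sees that $\rho_k$ agrees on $\sigma(\calU(\O_K))$ with $\sum_{\q\nmid p}h^{\chi_k}_\q(\infty_--\infty_+,P-Q)$; by Lemma~\ref{L:away} each such local term lies in the finite set $T_\q$ after dividing by $\chi_{k,\q}(\pi_\q)$, and $T_\q=\{0\}$ for all but finitely many $\q$ (in particular all $\q$ of good reduction). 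Setting $T_k$ to be the finite set of all values $\sum_{\q\nmid p}l_\q\,\chi_{k,\q}(\pi_\q)$ with $l_\q\in T_\q$, computable as in Remark~\ref{alg:htaway}, gives $\rho_k(\sigma(P))\in T_k$; local analyticity and computability of $\rho_k$ follow from the corresponding statements for $h^{\chi_k}_{\p_i}$ and the Coleman integrals, as in \cite{p-adicheightsHC}.

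The main obstacle is not any single deep step but rather the bookkeeping: one must verify that the $d$ functionals actually obtained---$r_K$ of them from relations among the $\ell_{j,i}$ and $d-r_K$ of them from the $\lambda^{\chi_k}$ for independent characters---are the right count and are genuinely there, which is exactly where the inequality $\rk(J/K)\le dg-r_K$, Condition~\ref{cond4}, and the existence of $r_2+1$ independent ramified idèle class characters (cited from \cite[\S2.2]{BBBM-QCNF}) all get used simultaneously; I would be careful to note that the theorem as stated only asserts the \emph{existence} of the $\rho_k$ with these properties and does not claim the resulting zero set is finite, so no transcendence or nondegeneracy input beyond the above is needed. A secondary technical point is confirming that Lemma~\ref{L:away}, stated for a local field $K_\q$ with its regular model, applies verbatim to each completion $K_\q$ of the number field $K$; this is immediate since the lemma is purely local, but it should be flagged. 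Finally, the appeal to Equation~\eqref{eq:height-at-p-infinities} requires that the complementary subspaces $W_{\p_i}$ be chosen (isotropic for the cup product) so that $h^{\chi_k}_{\p_i}$ is symmetric and computable; since $p$ splits completely each $X_{\p_i}$ is a curve over $\Q_p$ and such $W_{\p_i}$ exist, so this causes no difficulty.
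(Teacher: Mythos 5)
Your proposal is correct and follows essentially the same route as the paper: the authors prove this theorem by declaring it a consequence of the preceding discussion (Condition~\ref{cond4} giving the $\alpha_{j,i}^{\chi_k}$, the rank hypothesis giving the $r_K$ relations $\beta_{j,i}^{(k)}$, and Lemma~\ref{L:away} giving the finite sets $T_\q$) together with a straightforward modification of the proof of Theorem~\ref{T:QCZ-precise}, which is exactly the argument you spell out. Your additional remarks on the count $r_K + (d-r_K) = d$, the purely local nature of Lemma~\ref{L:away}, and the choice of isotropic $W_{\p_i}$ are consistent with the paper's setup and fill in the "straightforward modification" explicitly.
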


\begin{proof}
The proof follows from the discussion above and by a straightforward modification of the proof of Theorem \ref{T:QCZ-precise}.
\end{proof}

By the same reasoning as in Remark \ref{r:no-int-point} and Remark \ref{r:QC-over-Z-algorithm-remark}\eqref{r:no-integral-points-algorithm}, it is not necessary to assume that we have found an $\O_K$-integral point on $X$.

\subsection{Computing $\O_K$-integral points}\label{subsec:alg-OK-points}

We now discuss how Theorem \ref{T:QC-NF-integrals-pts-full-statement} may be used to compute the $\O_K$-integral points in practice. We imitate the steps from Algorithm \ref{alg:QC-Z}, except for a few minor changes that we now explain. 

Steps \eqref{a:qc-small}, \eqref{a:qc-q-heights} and \eqref{a:qc-hts}, or
\eqref{a:qc-mw-sieve} are exactly as in Algorithm~\ref{alg:QC-Z}. Step
\eqref{a:qc-express-linear-maps} now has two parts. First, we find $dg-r_K$
linearly independent elements $a_1,\ldots, a_{dg-r_K}\in J(K)$. Using basic
linear algebra, we find $r_K$ linear relations amongst
$\int_{\sigma_i(a_s)}\omega_j$ for $1\leq i\leq d$, $0\leq j \leq g-1$ and
$1\leq s \leq dg-r_K$. In this way, we get relations of the form
\eqref{eq:quadratic-chabauty-O_K-equations-from-Coleman-integrals}.
Next, for each chosen idèle class character $\chi$, we express
$h^{\chi}(\infty_- -\infty_+, a)$ in terms of $\int_{\sigma_i(a)}\omega_j$,
again by computing all quantities involved for
$a\in\{a_1,\ldots,a_{dg-r_K}\}$. It is again a basic linear algebra task to
compute coefficients $\alpha_{j,i}^{\chi}$ as in
\eqref{eq:heights-from-idele}. 
From now on, we assume that the functions $\rho_1,\ldots,\rho_d$ have been computed.

Step \eqref{a:qc-rho-in-z} differs from the $\Z$-integral case because we
now consider $d$ residue discs at once, coming from the $d$ primes
$\p_1,\ldots,\p_d$. For each $\overline{\mathcal{P}}=(\overline{P_1},\ldots,\overline{P_d})\in \overline{X}(\F_p)^d$, we introduce the residue polydisc notation 
\[
  \mathcal{D}(\overline{\mathcal{P}})\colonequals \{(P_1,\ldots,P_d)\in X(K\otimes \Q_p)\colon P_k\equiv \overline{P_k}\pmod{\p_k}, \text{ for each $1\leq k\leq d$}\}.
\] 
For each  $\mathcal{P}\in \mathcal{D}(\overline{\mathcal{P}})$, we also use the
notation
$\mathcal{D}(\mathcal{P})\colonequals\mathcal{D}(\overline{\mathcal{P}})$
for the residue polydisc that contains $\mathcal{P}$. We use any point
$\mathcal{P}= (P_1, \ldots , P_d) \in \mathcal{D}(\overline{\mathcal{P}})$ and well-behaved uniformizers
$t_{\p_1}=t_{\p_1}(z_1),\ldots,$ $t_{\p_d}=t_{\p_d}(z_d)$ 
with respect to $\p_1,\ldots,\p_d$, respectively, to parametrize all points
in $\mathcal{D}(\mathcal{P})$ by tuples $(z_1,\ldots,z_d)\in (p\Z_p)^d$.

Next, we express the functions $\rho_1,\ldots,\rho_d$ in the parameters
$(z_1,\ldots,z_d)$ by splitting Coleman integrals and local heights above
$p$ using the point $\mathcal{P}$. Namely, we imitate Step
\eqref{a:qc-rho-in-z} of Algorithm \ref{alg:QC-Z}, as explained  in Remark~\ref{r:QC-over-Z-algorithm-remark} (b), considering residue discs with respect to all $\mathfrak{p}_i$, $i\in\{1,\ldots,d\}$. 
 
In Step \eqref{a:qc-solve-in-z}, we need to find solutions to systems of multivariable power series with separated variables. Namely, we get $d$ equations of the shape
$F_k(z_1,\ldots,z_d)=0$, where 
\begin{itemize}
\item $F_k\in \Q_p[[z_1,\ldots,z_d]]$;
\item $F_k(z_1,\ldots,z_d)=s_k+F_{k,1}(z_1)+\cdots+F_{k,d}(z_d)$, with $s_k\in \Q_p$, $F_{k,j}\in \Q_p[[z_j]]$, and $F_{k,j}(0)=0$, for all $1\leq j\leq d$; 
\item for each $1\leq k\leq r_K$, $s_k$ is a constant that depends only on $k$ and $\mathcal{P}$;
\item for each $r_K+1\leq k\leq d$, there are $\#T_k$ values of $s_k$, and $s_k$ can be written as $s_k=b_k+u$, where $b_k$ depends only on $k$ and $\mathcal{P}$ and $u\in T_k$; here $T_k$ is the set defined in Theorem \ref{T:QC-NF-integrals-pts-full-statement}.
\end{itemize}
In total, we get $\#T_{r_K+1}\cdot \#T_{r_K+2} \cdots \#T_{d}$ systems to
solve. In general, we can solve such systems using a multivariable version
of Hensel's Lemma proved in~\cite{Conrad-Multivariable-Hensel}. For a practical algorithm for systems in two variables,
see \cite[Appendix A]{BBBM-QCNF}. We use a {\tt SageMath}-implementation of this algorithm due to Francesca Bianchi.

We mention a specific situation that allows us to conclude that there is a unique solution to the system. This is a special case of \cite[Theorem A.1]{BBBM-QCNF}. Recall that $p\geq 3$. Let us write 
\[
F_{k,j}(z_j)=c_{k,j,1}z_j+c_{k,j,2}z_j^2 + c_{k,j,3}z_j^3 + O(z_j^4).
\]
Denote $\mathcal{M}=(c_{k,j,1})_{1\leq k,j\leq d}\in \Q_p^{d\times d}$. If $\det(\mathcal{M})\not\equiv 0\pmod{p}$, then the system has a unique solution $(z_1,\ldots,z_d)\in (p\Z_p)^d$. This is particularly useful in the special case when $\mathcal{P}\in\sigma(\mathcal{U}(\O_K))$, and we can prove that $z_1=\cdots=z_d=0$ is the unique solution to the system above. This implies that $\mathcal{P}$ is the unique point in the intersection $\mathcal{D}(\mathcal{P})\cap \sigma(\mathcal{U}(\O_K))$.

We have implemented the method described in this section,
see~{\url{https://github.com/steffenmueller/LinQC}}.

\subsubsection{Precision analysis}\label{subsec:precOK}
We only give a brief sketch of the precision analysis in a special case, which
includes our example in~\S\ref{subsec:Example-QC-OK-real-points}. In this
case, the precision analysis is quite
similar to \cref{subsec:prec-qc-for-Z}. As the variables in the
functions 
$\rho_k$ are separated, the precision analysis for the computation of the
$\rho_k$ is the same as over $\Q$. For the same reason, solving for the roots of the
system of power series equations is similar to the
case over $\Q$ and is easier to analyze than the general case, at least when
there are no multiple roots modulo $p^n$. This indeed happens in our example. 
In this case, we use naive Hensel lifting, as explained in \cite[Theorem
A.1, Algorithm 1]{BBBM-QCNF}. In particular, we never reach 
Step (4)(ii) in~\cite[Algorithm 1]{BBBM-QCNF}, which would require a more
involved precision analysis.

\subsection{A real quadratic example}\label{subsec:Example-QC-OK-real-points}
Let $K=\Q(w)$, where $w^2=7$.
Using an implementation of the method discussed
in~\S\ref{subsec:alg-OK-points}, we now show that the set of 
$\O_K$-integral points $\calU(\O_K)$ on the hyperelliptic
curve $X/K$ defined by 
\begin{equation*}\label{rqex}
  y^2 = x^6 + x^5 - wx^4 + (1-w)x^3 + x^2 + wx + w
\end{equation*}
is exactly 
\begin{equation}\label{rqex:intpts}
  \calU(\O_K) = \{(-1,0), (1, \pm 2)\}\,.
\end{equation}
We are not aware of any other method that solves this problem.

It is easy to see that $\#J(K)_{\mathrm{tors}}=1$. 
The {\tt Magma}-implementation of the two-descent algorithm
from~\cite{Sto01} shows that $\rk(J/K)=3$. The points $a_1=[(1,-2)-\infty_-]$ and $a_2=[(1,2)-(-1,0)]$ are
independent and all differences of small $K$-rational points on $X$ lie in
the subgroup $\langle a_1, a_2\rangle \subset J(K)\otimes \Q$ (note 
that $X(K)$ contains the small non-integral points $\infty_{\pm}, 
(\frac{4-w}{3}, \frac{196-94w}{27})$).
Let $a_3\in J(K)$ be given by the Mumford representation
$$
a_3 = \left(x^2 + \frac{125+ 43w}{81}x + \frac{28+197w}{567}, \;
\frac{121513+ 40337w}{45927}x + \frac{85001+ 23797w}{45927}\right)\,.
$$
Then $J(K)\otimes \Q = \langle a_1,a_2,a_3\rangle$. 
We checked that in fact the subgroup of $J(K)$ generated by $a_1,a_2,a_3$
is saturated at all primes below~100.

The prime $3$ splits as $3\O_K =
\p_1\p_2$ and $X$ has good reduction at $\p_1$ and $\p_2$.
For all $1\le j \le 3$, the points $2a_j$ have the property that
$2a_j$ has a representative $D_j\in \Div^0(C)$ such that for both $i\in
\{1,2\}$,  $D_j\otimes K_{\p_i}$ is of the form $Q_1+Q_2-R_1-R_2$, where
$Q_1,Q_2\in X(K_{\p_i})$ reduce to affine points and $R_1,R_2\in X(K)$ 
are the points with $x$-coordinate~1.
Working with these representatives, we easily check that
Condition~\ref{cond4} holds. Hence
Theorem~\ref{T:QC-NF-integrals-pts-full-statement}  is applicable.

We now go through the steps of the number field version of
Algorithm~\ref{alg:QC-Z}, discussed in~\S\ref{subsec:alg-OK-points}.
We choose $Q=(-1,0)$ to be our integral base point and we use
the cyclotomic idèle class character on $K$, which we denote by $\chi$ and
which we drop from the notation.
Recall that for a real quadratic field,  $\chi$ is, up to scalar, the only
continuous id\'ele class character. 
{Since $J$ has ordinary reduction at $\p_1$ and $\p_2$, we may choose
the unit root subspace $W_{\p_i}$ for the local heights $h_{\p_i}$.}
Lemma~\ref{L:away} shows that $T\colonequals T_1=\{0\}$.
Using the above-mentioned representatives, we compute the local heights
$h_q(\infty_--\infty_+, D_j)$ for $q\nmid 3$ via the {\tt Magma} command
{\tt LocalIntersectionData}, which implements the algorithm
from~\cite{Mul14}. We compute the local heights
$h_{\p_i}(\infty_--\infty_+, D_j)$ using the {\tt
SageMath}-implementation of the algorithm from
\cite[\S3.2]{p-adicheightsHC}. 
This allows us to solve for coefficients $\alpha_{j,i}$ (note that these
are not unique). The coefficients $\beta_{j,i}$ are simpler to compute, requiring only
integrals of holomorphic differentials.
In this way, we find power series expansion of the two functions $\rho_1$
and $\rho_2$ in each residue disc. We solve for their common 
zeros $(z_1,z_2)\in X(K\otimes\Q_p)$ using a {\tt SageMath}-implementation
of~\cite[Algorithm~A.1]{BBBM-QCNF}, due to Francesca Bianchi.

In addition to the known integral points $(-1,0), (1, \pm2)$, there are~8
common pairs of zeros.
To show that they are not in the image of
$\sigma(\calU(\O_K))$, we apply the Mordell--Weil sieve. See~\cite{BBM2}
and~\cite[Appendix]{Jen-Netan-QCRP1} for more details. For each extra
zero $\underline{z}$ we compute
$\underline{\tilde{\alpha}}=(\tilde{\alpha}_1,\tilde{\alpha}_2,\tilde{\alpha}_3)\in
(\Z/3^5\Z)^3$, such that if $\underline{z}=\sigma(P)$ for some $P\in X(K)$,
then $[P-Q] = \alpha_1a_1+\alpha_2a_2+\alpha_3a_3$, where $\alpha_i\in \Z$ reduces to
$\tilde{\alpha}_i$.
We use the prime ideals of $\O_K$ generated by $(29, w+6), (37,
w+28)$ and $(2017, w+845)$, respectively. The residue fields $k_v$ of 
these primes $v$ all have the
property that $\#J(k_v)/2\cdot 3^5J(k_v)$ is large compared to $\#C(k_v)$ 
(it was not sufficient to work only modulo a power of~3).

Finally, we run through all triples in $(\Z/2\cdot 3^5\Z)^3$ 
that reduce to one of the $\underline{\tilde{\alpha}}$ and we show that, for at
least one $v$ as above, the corresponding coset of $2\cdot 3^5J(k_v)$ does
not intersect the image of $C(k_v)$.
This finishes the proof of~\eqref{rqex:intpts}.
For the code that we used in this example,
see~{\url{https://github.com/steffenmueller/LinQC}.

\subsection{Obstructions to linear quadratic Chabauty}

Let us now consider hyperelliptic
curves $X/\Q\colon y^2=f(x)=d_{2g+2}x^{2g+2}+\cdots+d_0$ such that $f\in \Z[x]$ is squarefree. 
Assume first that $d_{2g+2}>1$ is squarefree, so that the elementary
approach described in Appendix~\ref{appendix:Runge} is not applicable.
If we denote $K\colonequals \Q(\sqrt{d_{2g+2}})$, then  $\infty_{\pm}\in X(K)\backslash X(\Q)$. Therefore, we cannot use the method from Section~\ref{sec:QC-over-Z} to determine the $\Z$-integral points on $X$. 
It is tempting to try to compute  
$\calU(\O_K)$ using the approach of Section~\ref{subsec:alg-OK-points}, as
this would also suffice to 
find $\calU(\Z)$. Unfortunately, this does not work, as we now explain.

Let $p$ be a prime of good reduction that splits in $K$; write $p\O_K=\p_1\p_2$. Since $K$ is a real quadratic field, there is only one $p$-adic height $h(\cdot,\cdot)$ on $J(K)$ (up to scaling), induced by the cyclotomic idèle class character. 
However, it turns out that the restriction of
\begin{equation*}\label{eq:lambda}
\lambda\colon  J(K)\longrightarrow \Q_p\,;\quad a\mapsto h(\infty_- -\infty_+,a)\,.
\end{equation*}
to $J(\Q)$ is identically zero, due to Galois equivariance
of the Coleman-Gross height. See \cite[\S 6.5.5]{Gajovic-thesis} for more
details, and an alternative proof gives more precise
information on the local heights in \cite[Lemma 6.5.6, Remark
6.5.7]{Gajovic-thesis}. It follows that the resulting locally analytic
function is identically zero and hence cannot be used for quadratic
Chabauty, see~\cite[\S 6.6.1]{Gajovic-thesis}.
In fact, a necessary condition to compute the  $\O_K$-integral points over a
real quadratic field $K$ using our method, is that the curve is not a
quadratic twist of a curve defined over $\Q$.
The problem persists for higher degree number fields containing $K$, and  
similar problems arise if we use inert primes rather than split primes. 

The issue described above generalizes to arbitrary quadratic fields if we
use the cyclotomic $p$-adic height. 
If $K=\Q(\sqrt{d_{2g+2}})$ is an imaginary quadratic field, the same
problem occurs if we instead use the
anticyclotomic idèle class character (see \cite[\S 6.5.4,\S6.6.1]{Gajovic-thesis}). 
Instead, one could use the sum of the cyclotomic and anticyclotomic height
as in \cite{Jen-Netan-QCRP1}. In this
case, we have to require the stronger  rank condition $\rk(J/K)\leq 2g-1$.}

\begin{remark} 
We expect that a modified version of the approach
from \cite{BBM1} using $p$-adic Arakelov theory and double Coleman integrals can be used to compute the $\Z$-integral points on even degree hyperelliptic curves $y^2=f(x)$ when $f\in \Z[x]$, assuming that the usual rank condition is satisfied.
We also expect that such an approach can be generalized to number fields as in~\cite{BBBM-QCNF}.
\end{remark}

\appendix

\section{A special case of Runge's method}\label{appendix:Runge}
For expository purposes, we recall how Runge's method  \cite{Runge-Original-Paper} (see \cite[Chapter 5]{Schoof-Book-Catalan} for a nice survey) can sometimes be used to determine the integral points on monic hyperelliptic curves of even degree in an elementary way. For a more general approach and a proof that uses Runge's machinery, see \cite[Theorem 3]{Walsh-Runge}. Throughout this appendix (except in Remark \ref{rmk:negative-leading-coefficient}), we assume that $f=d_{2n}x^{2n}+\cdots+d_0\in \Z[x]$ is nonconstant and squarefree, and that $d_{2n}$ is a non-zero square, say $d_{2n}=b^2$, where $b\in\mathbb{Z}_{>0}$.

\begin{theorem}\label{T:Runge-bound}
Let $X/\Q$ be the hyperelliptic curve defined by $y^2=f(x)$.
There is an explicit constant $N>0$ depending only on the coefficients of $f$, such that if $(x_0,y_0)\in X(\Q)$ with $x_0,y_0\in \Z$, then $|x_0|\leq N$. 
\end{theorem}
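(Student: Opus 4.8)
The plan is to exploit the fact that the leading coefficient $a_{2n}=b^2$ is a perfect square, which allows one to approximate $\sqrt{f(x)}$ by a polynomial with rational (in fact, after clearing denominators, half-integral) coefficients. First I would construct, by formally expanding $b x^n \sqrt{1 + (a_{2n-1}/a_{2n})x^{-1} + \cdots + (a_0/a_{2n})x^{-2n}}$ as a Laurent series in $x^{-1}$, a polynomial $g(x)\in \Q[x]$ of degree $n$ such that $f(x) - g(x)^2$ has degree at most $n-1$; concretely, $g$ collects the Laurent terms of nonnegative degree in the formal square root. Let $M\in\N$ be a common denominator, so that $h(x)\colonequals Mg(x)\in\Z[x]$ and $M^2 f(x) - h(x)^2 =: r(x)\in\Z[x]$ has degree $\le n-1$. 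Similarly one has $M^2 f(x) - (-h(x))^2 = r(x)$, so both $h$ and $-h$ are "square-root approximants''.

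Next I would argue as follows for an integral point $(x_0,y_0)$ with, say, $|x_0|$ large. Then $M^2 y_0^2 = h(x_0)^2 + r(x_0)$, so $(My_0 - h(x_0))(My_0 + h(x_0)) = r(x_0)$ and likewise $(My_0 - (-h(x_0)))(My_0 + (-h(x_0))) = r(x_0)$. Choosing the sign of the approximant opposite to the sign of $My_0$ (i.e. replacing $h$ by $-h$ if necessary), the factor $My_0 + h(x_0)$ — the "sum'' of two quantities of the same sign and comparable size — satisfies $|My_0 + h(x_0)| \gtrsim |x_0|^n$ once $|x_0|$ is large, because $|My_0|$ and $|h(x_0)|$ are each $\sim b|x_0|^n$. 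On the other hand $|r(x_0)| \le C|x_0|^{n-1}$ for an explicit constant $C$ depending only on the coefficients of $f$ and on $M$. Hence the other factor satisfies
\[
|My_0 - h(x_0)| = \frac{|r(x_0)|}{|My_0 + h(x_0)|} \le \frac{C|x_0|^{n-1}}{c|x_0|^{n}} = \frac{C}{c|x_0|} < 1
\]
for $|x_0|$ exceeding an explicit bound. But $My_0 - h(x_0)$ is a rational number with bounded denominator (dividing $M$), so if it has absolute value less than $1/M$ it must be... well, here one has to be slightly careful: $My_0 - h(x_0)\in \frac1? \Z$; since $My_0\in\Z$ and $h(x_0)\in\Z$, actually $My_0 - h(x_0)\in\Z$, so $|My_0 - h(x_0)|<1$ forces $My_0 = h(x_0)$, whence $r(x_0) = 0$. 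Since $r$ is a fixed polynomial of degree $\le n-1$, it has at most $n-1$ integer roots (it is not identically zero because $f$ is squarefree, hence not a square in $\Q[x]$, so $g^2\ne f$), giving finitely many — and explicitly boundable — possibilities for $x_0$.

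Assembling these estimates yields the explicit constant $N$: it is the maximum of the threshold beyond which the displayed inequality forces $r(x_0)=0$, and the largest absolute value of an integer root of $r$. The only real subtlety — and the step I expect to require the most care — is making every constant genuinely explicit and uniform in the two sign choices: one must track the lower bound $c$ for $|My_0+h(x_0)|/|x_0|^n$ and the upper bound $C$ for $|r(x_0)|/|x_0|^{n-1}$ in terms of the coefficients of $f$ and the denominator $M$, and verify that the "large $|x_0|$'' regime is entered before these asymptotic estimates are invoked (for small $|x_0|$ the bound $|x_0|\le N$ is automatic once $N$ is taken large enough). One should also record that $r\not\equiv 0$, which is exactly where squarefreeness of $f$ enters. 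None of this requires Runge's general machinery; it is the classical two-term version of the argument specialized to the square-leading-coefficient case.
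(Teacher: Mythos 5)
Your proposal is correct and follows essentially the same route as the paper: the identical square-root approximant $g$ with $\deg(f-g^2)\le n-1$, cleared of denominators, followed by the standard Runge estimate comparing a degree-$n$ main term against a degree-$(n-1)$ remainder. The only cosmetic difference is that you conclude by factoring $(My_0-h(x_0))(My_0+h(x_0))=r(x_0)$ and forcing the small integer factor to vanish, whereas the paper sandwiches $F(x_0)$ strictly between $(G(x_0)-1)^2$ and $(G(x_0)+1)^2$; these are the same inequality in different packaging, and you additionally (and correctly) record that squarefreeness of $f$ is what guarantees $r\not\equiv 0$, a point the paper leaves implicit.
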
 

We prove Theorem \ref{T:Runge-bound} using Lemma~\ref{decomposition} and Lemma~\ref{bound}.

\begin{lemma} \label{decomposition}
There exists a
polynomial 
\begin{equation}\label{E:g}
g(x)=bx^n+c_{n-1}x^{n-1}+\cdots+c_1 x+c_0\in\mathbb{Q}[x]
\end{equation}
such that $\deg(f-g^2)\leq n-1$. Furthermore, we have $$c_i\in\dfrac{1}{(2b)^{2(n-i)-1}}\mathbb{Z}\, \quad \text{ for }\;0\leq i\leq n-1\,.$$
\end{lemma}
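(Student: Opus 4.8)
The plan is to construct $g$ as the truncated formal square root of $f$, determining its coefficients from the top degree downwards, and then to bound the denominators by a strong induction on the index gap.

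First I would write $f(x)=\sum_{l=0}^{2n}a_lx^l$ with $a_{2n}=b^2$, and look for $g(x)=\sum_{l=0}^{n}g_lx^l$ with $g_n=b$ and $g_l=c_l$ for $l<n$. The requirement $\deg(f-g^2)\le n-1$ is equivalent to the $n$ equations $[x^{2n-k}]g^2=a_{2n-k}$ for $1\le k\le n$; the equation for $k=0$ is automatic since $g_n^2=b^2=a_{2n}$. Expanding the square gives
\[
[x^{2n-k}]g^2 = 2b\,c_{n-k} + \sum_{l=n-k+1}^{n-1} c_l\,c_{2n-k-l}\,,
\]
so the $k$-th equation becomes
\[
c_{n-k} = \frac{1}{2b}\left(a_{2n-k} - \sum_{l=n-k+1}^{n-1} c_l\,c_{2n-k-l}\right)\,,\qquad 1\le k\le n\,.
\]
The right-hand side involves only $c_{n-1},\dots,c_{n-k+1}$, so this recursion determines $c_{n-1},\dots,c_0\in\Q$ uniquely; this establishes existence of $g$, and the degree bound is immediate from the construction.

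For the denominator claim I would prove, by strong induction on $k$, that $c_{n-k}\in(2b)^{-(2k-1)}\Z$ for $1\le k\le n$; putting $i=n-k$ this is exactly $c_i\in(2b)^{-(2(n-i)-1)}\Z$. The base case $k=1$ is $c_{n-1}=a_{2n-1}/(2b)\in(2b)^{-1}\Z$ since $a_{2n-1}\in\Z$. For the inductive step, in each summand $c_l\,c_{2n-k-l}$ of the recursion write $l=n-m_1$ and $2n-k-l=n-m_2$; then $m_1,m_2\ge 1$ and $m_1+m_2=k$, so $m_1,m_2\le k-1$, and the inductive hypothesis gives $c_l\,c_{2n-k-l}\in(2b)^{-(2m_1-1)}(2b)^{-(2m_2-1)}\Z=(2b)^{-(2k-2)}\Z$. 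Since $a_{2n-k}\in\Z\subseteq(2b)^{-(2k-2)}\Z$ as well, the bracket lies in $(2b)^{-(2k-2)}\Z$, and dividing by $2b$ yields $c_{n-k}\in(2b)^{-(2k-1)}\Z$, completing the induction.

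The construction itself is entirely routine; the one point that requires care — and the reason the argument works — is that in the inductive step the two exponents $2m_1-1$ and $2m_2-1$ add up to $2k-2$, not $2k$. This is what keeps the denominator bound stable under the recursion, and it relies on the fact that every product term involves two earlier coefficients whose index gaps $m_1,m_2$ are both strictly smaller than $k$. I do not anticipate any serious obstacle beyond this bookkeeping.
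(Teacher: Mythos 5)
Your proof is correct and follows the same approach as the paper: equate coefficients of $x^{2n-k}$ for $1\le k\le n$ to determine the $c_i$ recursively from the top down, then verify the denominator bound by induction. The paper only sketches this ("we see inductively"); your write-up supplies the details, including the key observation that the two exponents in each product term sum to $2k-2$.
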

\begin{proof}
We explicitly construct a polynomial $g$ satisfying~\eqref{E:g} by equating coefficients
of $x^k$ for $n \leq k\leq 2n-1$ in $g^2$ and $f$. 
From these equalities, we see inductively that each $c_i$ for $n-1\geq i\geq 0$ is uniquely
determined, and that $c_i\in\dfrac{1}{(2b)^{2(n-i)-1}}\mathbb{Z}$.
  Thus, $g\in\dfrac{1}{(2b)^{2n-1}}\mathbb{Z}[x]\subset
\mathbb{Q}[x]$.
\end{proof}

Let us denote $h\colonequals f-g^2$, where $g$ is defined by Lemma~\ref{decomposition}. Multiply $h$ and $f$ by the minimal power $(2b)^{2k}$ of $2b$ such that $(2b)^{2k}h, (2b)^{2k}f, (2b)^{k}g$ have integral coefficients.
By Lemma \ref{decomposition},
  we know that $k\leq 2n-1$. Let $F\colonequals (2b)^{2k} f$, $G\colonequals (2b)^kg$
  and $H\colonequals (2b)^{2k}h$; then $F=G^2+H$. We can write $$G(x)=p_nx^n+\cdots+p_1x+p_0,\hspace*{3mm} H(x)=q_{n-1}x^{n-1}+\cdots+q_1x+q_0,$$
where $p_0,\ldots,p_n,q_0,\ldots,q_{n-1}\in\mathbb{Z}$ such that $p_n\neq 0$, and not all $q_i$'s
are zero. 

The integral points on $X$ inject into the integral points on the curve  
$X'\colon y^2=F(x)$ via
$(x_0,y_0)\in X(\Q)\mapsto (x_0,(2b)^ky_0)\in X'(\Q)$.
We can bound the integral points on
$X'$ in the following way. Denote
\[M \colonequals  \dfrac{\displaystyle\sum_{i=0}^{n-1}(2|p_i|+|q_i|)+1}{2|p_n|}.
\]

\begin{lemma} \label{bound} If $(x_0,y_0)\in X'(\Q)$ is an
integral point, then $H(x_0)=0$ or 
$|x_0|\leq M$.
\end{lemma}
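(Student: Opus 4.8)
The plan is to carry out the classical Runge argument in its most elementary ``trapped between consecutive squares'' form. By definition of $X'$ and of $G,H$, an integral point $(x_0,y_0)\in X'(\Q)$ satisfies
\[
  y_0^2 = G(x_0)^2 + H(x_0),
\]
where $x_0,y_0,G(x_0),H(x_0)\in\Z$ since $G,H\in\Z[x]$. If $x_0=0$ then $|x_0|\le M$ holds trivially, so I would assume $x_0\ne 0$, i.e.\ $|x_0|\ge 1$, and argue by contradiction: suppose that $H(x_0)\ne 0$ --- hence $|H(x_0)|\ge 1$ --- and that $|x_0|>M$. The aim is to show that under these hypotheses $|y_0|$ is squeezed strictly between $|G(x_0)|-1$ and $|G(x_0)|+1$, so that $|y_0|=|G(x_0)|$, which forces $H(x_0)=y_0^2-G(x_0)^2=0$, a contradiction.

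The core of the argument is the inequality $|H(x_0)|<2|G(x_0)|-1$. To obtain it I would first record two elementary bounds valid because $|x_0|\ge 1$: the triangle inequality gives $|H(x_0)|\le\bigl(\sum_{i=0}^{n-1}|q_i|\bigr)|x_0|^{n-1}$, and the reverse triangle inequality gives $|G(x_0)|\ge |x_0|^{n-1}\bigl(|p_n||x_0|-\sum_{i=0}^{n-1}|p_i|\bigr)$. Next I would unwind the hypothesis $|x_0|>M$ into the form
\[
  2|p_n||x_0| > \sum_{i=0}^{n-1}\bigl(2|p_i|+|q_i|\bigr)+1,
\]
equivalently $2\bigl(|p_n||x_0|-\sum_{i=0}^{n-1}|p_i|\bigr)-\sum_{i=0}^{n-1}|q_i|>1$; in particular $|p_n||x_0|>\sum_{i=0}^{n-1}|p_i|$, so the lower bound for $|G(x_0)|$ is a positive quantity. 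Multiplying this last inequality by $|x_0|^{n-1}\ge 1$ and combining with the two bounds above then yields $|H(x_0)|<2|G(x_0)|-1$, and since $|H(x_0)|\ge 1$ this also forces $|G(x_0)|>1$.

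With $|H(x_0)|<2|G(x_0)|-1$ (hence a fortiori $|H(x_0)|<2|G(x_0)|+1$) in hand, I would conclude by feeding it back into $y_0^2=G(x_0)^2+H(x_0)$: one gets $(|G(x_0)|-1)^2<y_0^2<(|G(x_0)|+1)^2$, and taking square roots (legitimate since $|G(x_0)|-1>0$) gives $|G(x_0)|-1<|y_0|<|G(x_0)|+1$. As $|y_0|$ is an integer, $|y_0|=|G(x_0)|$, so $H(x_0)=0$, contradicting our assumption. Hence $|x_0|>M$ is impossible when $H(x_0)\ne 0$, which is exactly the statement that $H(x_0)=0$ or $|x_0|\le M$.

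There is no serious conceptual obstacle here; the only point that needs a little care is tracking the constants so that the factor $2$ in the denominator of $M$ genuinely appears --- that is, so that one reaches the bound $|H(x_0)|<2|G(x_0)|-1$ rather than merely $|H(x_0)|<|G(x_0)|$ --- since this is precisely what allows the comparison with $(|G(x_0)|\pm 1)^2$ instead of with $|G(x_0)|^2$. One should also double-check the edge cases $n=1$ and $|x_0|=1$, where $|x_0|^{n-1}=1$; the inequalities above have been arranged to survive them.
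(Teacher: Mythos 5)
Your argument is correct and is essentially the paper's own proof: both rest on the bound $|H(x_0)|<2|G(x_0)|-1$ for $|x_0|>M$ (obtained via $|x_0|^i\le|x_0|^{n-1}$ and the definition of $M$), which traps $y_0^2=G(x_0)^2+H(x_0)$ strictly between $(|G(x_0)|-1)^2$ and $(|G(x_0)|+1)^2$ and forces $y_0^2=G(x_0)^2$, i.e.\ $H(x_0)=0$. The only cosmetic difference is that you phrase it as a contradiction and isolate the edge cases $x_0=0$ and $|G(x_0)|\le 1$ explicitly, which the paper leaves implicit.
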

\begin{proof}
We prove that $F(x_0)$ lies
between $(G(x_0)-1)^2$ and $(G(x_0)+1)^2$ for almost all $x_0$. On the one
  hand, we have   \begin{align*}
    |(G(x)\pm1)^2-G(x)^2|&=|2G(x)\pm1|\geq 2|G(x)|-1\\
    &\geq
    2|p_n||x|^n-2|p_{n-1}||x|^{n-1}-\cdots-2|p_1||x|-2|p_0|-1\equalscolon
    P_U(|x|)\,.
  \end{align*}

On the other hand,
$$|H(x)|\leq |q_{n-1}||x|^{n-1}+\cdots+|q_1||x|+|q_0|.$$

Therefore we deduce
$$P_U(|x|)-|H(x)|\geq 2|p_n||x|^n-(2|p_{n-1}|+|q_{n-1}|)|x|^{n-1}-\cdots-(2|p_1|+|q_1|)|x|-(2|p_0|+|q_0|)-1.$$
For $x\in \mathbb{Z}$, $x\neq 0$, we have the inequalities $|x|^k\leq |x|^{n-1}$ for all
$0\leq k\leq n-1$, so
$$\displaystyle\sum_{i=0}^{n-1}(2|p_i|+|q_i|)|x|^i+1\leq
\left(\displaystyle\sum_{i=0}^{n-1}(2|p_i|+|q_i|)+1\right)|x|^{n-1}.$$
For $|x| > M$ it follows that
$$\left(\displaystyle\sum_{i=0}^{n-1}(2|p_i|+|q_i|)+1\right)|x|^{n-1}<2|p_n||x|^n.$$
Thus, for $|x_0|>M$, we have
$$|(G(x_0)\pm1)^2-G(x_0)^2|\geq P_U(|x_0|)>|H(x_0)|,$$
implying that $G(x_0)^2+H(x_0)$ lies between $(G(x_0)-1)^2$ and $(G(x_0)+1)^2$. Hence, if it is a square of an integer, it must be exactly $G(x_0)^2$, implying $H(x_0)=0$. 
\end{proof}

\begin{proof}[Proof of Theorem~\ref{T:Runge-bound}]
  Let $(x_0,y_0)\in X(\Q)$ be an integral point. Using the injection of
  integral points of $X$ into the integral points of $X'$, Lemma
  \ref{bound}, implies that $|x_0|\leq M$ or $H(x_0)=0$. If $H(x_0)=0$, we
  can easily bound $|x_0|$. 
Furthermore, the proof is constructive, and the required bounds are computed only in terms of coefficients of $f$. Hence we have described an algorithm that in a finite number of steps can determine all integral points on $X$, finishing the proof of Theorem \ref{T:Runge-bound}.
\end{proof}

\begin{remark}\hfill
\begin{enumerate}
\item We do not state the bound $N$ explicitly in terms of the coefficients of $f$. However, it is clear from the construction that the bound is a rational function in terms of the coefficients of $f$. In practice, we found that the bound is sufficiently small to easily compute the integral points.
\item The bounds from the algorithm can be improved. Namely, using standard real analysis, we can locate 
all zeros of $H$ and find all real intervals on which the inequality $P_U(|x|)\leq|H(x)|$ is satisfied.
\item We can determine $\overline{X}(\mathbb{F}_p)$, for some set of small primes $p$, imposing conditions on $x$-coordinates modulo $p$. In this way, we can speed up the search for integral points on $X$.
\end{enumerate}
\end{remark}

\begin{remark}\label{rmk:negative-leading-coefficient}
If we assume that $d_{2n}<0$, then there is also an elementary method to determine the integral points on $X$. Since $\displaystyle \lim_{x\to\infty_-}f(x)=\lim_{x\to\infty_+}f(x)=-\infty$, it follows that $f(x)\geq 0$ can only happen for finitely many $x\in \Z$. Using basic calculus, it is easy to determine an explicit constant $M'$, depending only on the coefficients of $f$, such that if $|x|>M'$, then $f(x)\leq -1$.
\end{remark}
\bibliographystyle{alpha}
\bibliography{References}

\end{document}